\newcommand{\norm}[2]{\left\|#1\right\|_{#2}}
\newcommand{\scalar}[2]{\langle{ #1},{#2} \rangle}
\newcommand{\abs}[1]{\left\lvert #1 \right\rvert}
\newcommand{\lr}[1]{\left(#1\right)}
\newcommand{\nat}{\mathbb N}
\newcommand{\real}{\mathbb R}
\newcommand{\range}{\mathcal R}
\newcommand{\J}{\mathbf{J}_2}
\newcommand{\E}[1]{\mathbf{E}_2^{#1}}
\newtheorem{thm}{Theorem}
\newtheorem{prop}{Proposition}
\newtheorem{corollary}{Corollary}
\theoremstyle{definition}
\newtheorem{mydef}{Definition}
\newtheorem{rem}{Remark}
\newtheorem{xmpl}{Example}
\newtheorem{lem}{Lemma}
\numberwithin{equation}{section}
\newcommand{\bigo}{\mathcal O}
\newcommand{\littleo}[1]{{\scriptstyle\mathcal{O}}\lr{#1}}
\newcommand{\pmathe}[1]{#1}
\author{Peter Math\'e}
\address{Weierstra{\ss} Institute for Applied Analysis and Stochastics, Mohrenstra{\ss}e 39, 10117 Berlin,  Germany}
\email{peter.mathe@wias-berlin.de}
\author{Bernd Hofmann}
\address{Faculty of Mathematics, Chemnitz University of Technology,
 09107 Chemnitz,  Germany}
\email{hofmannb@mathematik.tu-chemnitz.de}
\title[Comparing the Ill-posedness of linear operators]{Comparing the ill-posedness for linear operators in Hilbert spaces}
\date{\today}
\keywords{linear operator equations, ill-posedness, range inclusions,
  singular values}
\subjclass{47B02, second. 47A52, 65J20}
\dedicatory{\raggedright If a mthematical theory has not yet proved useful,\\\raggedright then it
  should at least be beautiful. \\[1ex]Dedicated to the memory of Albrecht Pietsch,
  a distinguished scientist and teacher}
\begin{document}
\begin{abstract}
The difficulty for solving ill-posed linear operator equations in
Hilbert space is reflected by the strength of ill-posedness of the governing operator,
and the inherent solution smoothness. In this study we focus on the
ill-posedness of the operator, and we propose a partial ordering for
the class of all bounded linear operators which lead to ill-posed operator
equations.
For compact linear operators, there is a simple characterization in
terms of the decay rates of the singular values. In the context of
the validity of the spectral theorem the partial ordering can also be
understood.
We highlight that range inclusions yield partial ordering, and we
discuss cases when compositions of compact and non-compact operators
occur. Several examples complement the theoretical results.
 \end{abstract}

\maketitle

\section{Introduction}
\label{sec:intro}

The goal of this study is to \emph{compare the strength (degree) of ill-posedness} between \emph{two} linear operator equations
\begin{equation} \label{eq:opeq1}
A x = y \qquad (x \in X, \,y \in Y)
\end{equation}
and
\begin{equation} \label{eq:opeq2}
A^\prime x^\prime = y^\prime \qquad (x^\prime \in X^\prime, \,y^\prime \in Y^\prime).
\end{equation}
Both equations represent mathematical models for inverse problems, and they are characterized by the \emph{injective} and \emph{bounded} linear forward operators $A: X \to Y$ and $A^\prime: X^\prime \to Y^\prime$ mapping between the
\emph{infinite dimensional and separable real Hilbert spaces} $X,Y$ as
well as $X^\prime,Y^\prime$. It is supposed that the operators $A$ and
$A^\prime$ possess non-closed ranges $\mathcal{R}(A)$ and
$\mathcal{R}(A^\prime)$.
This has the consequence that both operator equations~\eqref{eq:opeq1} and~\eqref{eq:opeq2} are ill-posed.
To avoid additional technical notation we
shall assume that the operators have dense ranges in~$Y$
and~$Y^{\prime}$, respectively.
To simplify the formulation, we speak in the following of comparing
the strength of the ill-posedness of the operators $A$ and $A^\prime$,
but actually mean the comparison of the strength of ill-posedness of
the operator equations~\eqref{eq:opeq1} and~\eqref{eq:opeq2}.

The comparison of the ill-posedness of linear operator equations was
raised earlier, especially when comparing equations with compact and
non-compact operators. In \cite[p.55]{Nashed86}, M.~Z.~Nashed states that
``\dots an equation involving a bounded non-compact operator with non-closed range is
`less' ill-posed than an equation with a compact operator with
infinite dimensional range''. Often the comparison of compact operators  is based
on the \emph{degree of ill-posedness} or the \emph{interval of ill-posedness},
and we refer to \cite{HofTau97} for formal definitions.
Such comparison seems rough, as there are very different operators, sharing
the same degree of ill-posedness. It may even happen that the degree of ill-posedness equals zero, although
the operator equation is ill-posed. Other authors consider the growth rate of distribution functions, as in \cite{WH24},
or the decay rate of decreasing rearrangement, see \cite{MNH22}.

Here we introduce a direct comparison by means of a partial
ordering. Such partial ordering must take into account that the
governing operators~$A$ and $A^{\prime}$ may have different domain and
target spaces. The formal definition is given in
Section~\ref{sec:ordering}. Then we show in
Section~\ref{sec:ordering-compact} that for compact operators
the present definition coincides with comparing the decay rates of the
singular values. In a side step, we touch the cases when the
compact operator~$A^{\prime}$ is a composition of another compact operator $A$
with a non-compact one possessing a non-closed range, see Section~\ref{sec:non}.
 Comparison of two operators may also be viewed as comparison of the
 ranges, both dense in their target spaces. We highlight in
 Section~\ref{sec:inclusions} that this is
 indeed covered by the present approach.
 Section~\ref{sec:regularization} indicates how range inclusions have
 impact on regularization theory; and this was actually the starting
 point for our investigations.
 The comparison of non-compact operators may be reduced to comparing
 the self-adjoint non-negative companions via the polar
 decomposition. In Section~\ref{sec:spectral} we use the spectral
 theorem to indicate when and how such non-compact operators can be
 compared.
 Throughout the study, we present examples that explain our
 attitude, and the final Section~\ref{sec:rangestudies} complements
 the analysis with two further examples.

\section{Partial ordering of operators: Definition and general results}
\label{sec:ordering}

For the bounded linear operators $A$ and $A^\prime$ with non-closed
ranges, we introduce a partial ordering as follows:
\begin{mydef}
  [partial ordering] \label{def:ordering}
   The operator~$ A^{\prime} \colon X^{\prime} \to Y^{\prime}$ is said to be \emph{more ill-posed than}
$A \colon X \to Y$ whenever there exist a bounded linear
 operator $S\colon X^{\prime} \to X$ and an orthogonal operator $R\colon Y \to Y^{\prime}$
 such that~$A^{\prime} = R\, A\, S$. In this case we shall
 write~$A^{\prime} \prec_{R,S} A$.
\end{mydef}
\begin{rem} \label{rem:strict}
  Not every pair of operators may be comparable in the sense of the partial ordering introduced by Definition~\ref{def:ordering}.
  If a pair $S$ and $R$ of operators with~$A^{\prime} \prec_{R,S} A$ does not exist, we shall write $A^\prime \not \prec A$.
 If we have that $A^{\prime} \prec_{R,S} A$ but~$A \not \prec A^\prime$, then the operator $A^{\prime}$ is said to be \emph{strictly more ill-posed than}~$A$.
 If, however, $A^\prime \not \prec A$ as well as $A \not \prec
 A^\prime$, then $A$ and $A^\prime$ are said to be
 \emph{incomparable}. Finally we shall write~$A^{\prime}\asymp A$ if
 both~$A^{\prime} \prec A$ and~$A \prec A^{\prime}$. With respect to Definition~\ref{def:ordering}, the relation `$\asymp$' represents the specific kind of `equality' of two operators such that the partial ordering is not only reflexive and transitive, but also antisymmetric.
\end{rem}

\begin{figure}
\center
  \centering
 \begin{equation*}
\begin{CD}
    X^{\prime} @> A^{\prime}>> Y^{\prime}\\
  @VS VV @AA R  A \\
    X  @> A >>Y
  \end{CD}
\end{equation*}
\caption{Comparison of operators $A$ and $A^\prime$ via factorization, where using the orthogonal mapping $R$.}
  \label{fig:comparison}
\end{figure}

Factorization as shown in Figure~\ref{fig:comparison} can hold true in many ways. There are two notable
situations, namely the one-sided compositions, i.e.,\ when
either $A^{\prime}= T A$, or $A^{\prime}= A S$. These are called \emph{factorization
from the left} or \emph{factorization from the right}, respectively.
Let us mention that factorization from the left with $T=R$ being
orthogonal is not interesting, because then we have that also~$A = R^{\ast}A^{\prime}$, and hence that~$A^{\prime} \asymp A$.
However, the situation for~$A^\prime=T A$, and non-orthogonal
operator~$T$ is less trivial, and we refer
to examples in Section~\ref{sec:non}.

A natural example for factorization from the right~$A^{\prime}= A S$, and hence ordering,
occurs when~$X^{\prime} \subset X$, i.e., when $X^{\prime}$
is continuously embedding into~$X$ via the mapping~$S=J_{X^{\prime}}^{X}$, and the mapping~$A^{\prime}$ is the
restriction of~$A\colon X\to Y$. In this case we can write~$A^{\prime}
= A \, J_{X^{\prime}}^{X}$, thus~$A^{\prime}
\prec_{I_{Y},J_{X^{\prime}}^{X}} A$.

Finally we mention that the partial ordering is compatible with the absolute
value. Indeed, given a bounded operator~$A\colon X \to Y$ we
assign the absolute value operator $\abs{A}:= \lr{A^{\ast}A}^{1/2} \colon X \to X$ .
By virtue of the polar decomposition, see~\cite[Chapt.~1,
3.9]{MR1721402} the following result holds true.
\begin{prop}\label{prop:absolute}
  Let~$A$ be a bounded operator and~$\abs{A}$ its absolute
  value. Then we have that~$A \asymp \abs{A}$.
\end{prop}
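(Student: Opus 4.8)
The plan is to derive everything from the polar decomposition $A = U\abs{A}$ together with the standing assumptions on $A$ (injectivity and dense range), after which the two factorizations required by Definition~\ref{def:ordering} can simply be read off the identity $A = U\abs{A}$ in both directions.

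First I would recall the canonical polar decomposition $A = U\abs{A}$, in which $U\colon X \to Y$ is the partial isometry with $\ker U = \ker \abs{A} = \ker A$, initial space $(\ker A)^{\perp} = \overline{\range(A^{\ast})} = \overline{\range(\abs{A})}$, and final space $\overline{\range(A)}$. The key observation is that, under our hypotheses, this $U$ is in fact \emph{orthogonal}. Indeed, injectivity of $A$ gives $\ker U = \set{0}$ and $(\ker A)^{\perp} = X$, so $U$ is an isometry defined on all of $X$; being bounded below, it has closed range. Since $\range(A) = U(\range(\abs{A}))$ is dense in $\range(U)$ and, by the standing density assumption, dense in $Y$, we conclude $\range(U) = \overline{\range(A)} = Y$. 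Hence $U\colon X \to Y$ is a bijective isometry, i.e.\ orthogonal, and in particular $U^{\ast}U = I_{X}$, with $U^{\ast}\colon Y \to X$ again orthogonal.

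With this in hand the proposition is immediate. Writing $A = U\,\abs{A}\,I_{X}$ with $U$ orthogonal and $I_{X}\colon X \to X$ bounded, Definition~\ref{def:ordering} gives $A \prec_{U, I_{X}} \abs{A}$. Conversely, multiplying $A = U\abs{A}$ on the left by $U^{\ast}$ and using $U^{\ast}U = I_{X}$ yields $\abs{A} = U^{\ast}\,A\,I_{X}$, so that $\abs{A} \prec_{U^{\ast}, I_{X}} A$. Together these give $A \asymp \abs{A}$.

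The only point that requires genuine care --- the main obstacle, such as it is --- is precisely the promotion of the polar partial isometry to an orthogonal operator. This is where both standing hypotheses enter: without injectivity of $A$ or without density of $\range(A)$ one obtains only a partial-isometric intertwiner, and a short dimension/range argument shows the conclusion may then genuinely fail (for instance, if $A$ is not injective one can arrange that $\overline{\range(\abs{A})}$ has codimension one, so no factorization $A = R\,\abs{A}\,S$ with orthogonal $R$ can have dense range).
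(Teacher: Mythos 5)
Your proof is correct and follows essentially the same route as the paper: invoke the polar decomposition $A = U\abs{A}$, observe that under the standing injectivity and dense-range assumptions $U$ is orthogonal, and read off $A \prec_{U, I_{X}} \abs{A}$ together with $\abs{A} \prec_{U^{\ast}, I_{X}} A$. Your justification that the partial isometry is actually orthogonal is more detailed than the paper's (which simply asserts it), but the argument is the same.
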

\begin{proof}
  By assumption we have that $A = U \abs{A}$ for an orthogonal operator
$U \colon X \to Y$, as well as~$\abs{A} = U^{\ast} A$. Therefore
$A \prec_{U, I_{X}} \abs{A}$, and~$\abs{A} \prec_{U^{\ast},I_{X}} A$,
which yields the conclusion.
\end{proof}

\section{Partial ordering of compact operators}
\label{sec:ordering-compact}

This definition, depicted in the Figure~\ref{fig:comparison}, is well-motivated for \emph{compact} operators $A$ and $A^\prime$.
As the following two propositions show, Definition~\ref{def:ordering} compares in that case the behavior of the decay rates of singular values $s_n$ to zero as $n \to \infty$ of
the operators $A$ and $A^{\prime}$.

\begin{prop}\label{prop:decay-rates1}
  Suppose that the compact operator $A^\prime \colon X^\prime \to Y^\prime$ is more ill-posed than $A \colon X \to Y$, i.e.~$A^{\prime} \prec_{R,S} A$. Then the decay rate to zero of the singular values of $A^\prime$ is not slower than the corresponding decay rate of $A$, which means that
 \begin{equation} \label{eq:notslower}
s_{n}(A^{\prime}) = \mathcal O(s_{n}(A)) \quad \mbox{as} \quad n\to\infty.
 \end{equation}
\end{prop}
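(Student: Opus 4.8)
The plan is to exploit the factorization $A^{\prime} = R\,A\,S$ together with the classical behaviour of singular values under composition with bounded operators, namely the inequality $s_{n}(BCD) \le \norm{B}{}\,s_{n}(C)\,\norm{D}{}$ for bounded $B,D$ and compact $C$. This is the standard multiplicativity (or rather, the ideal property) of singular numbers, valid in any Hilbert space and attributable to the general theory of $s$-numbers; since the paper is dedicated to Pietsch, it is natural to cite this as the ideal property of the singular number sequence.

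First I would recall that since $R$ is orthogonal, $\norm{R}{} = 1$, so it plays no role in the size of the singular values; in fact $s_{n}(R\,A\,S) = s_{n}(A\,S)$ because an orthogonal (isometric, since ranges are dense) operator on the target does not change singular values at all. Then I would apply the ideal inequality to $A\,S$, obtaining
\begin{equation*}
s_{n}(A^{\prime}) = s_{n}(R\,A\,S) = s_{n}(A\,S) \le \norm{R}{}\,s_{n}(A)\,\norm{S}{} = \norm{S}{}\,s_{n}(A)
\end{equation*}
for every $n \in \nat$. Since $\norm{S}{}$ is a finite constant independent of $n$, this is precisely the assertion $s_{n}(A^{\prime}) = \bigo(s_{n}(A))$ as $n\to\infty$, with the implied constant equal to $\norm{S}{}$.

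The only genuinely delicate point is justifying that $s_{n}(R\,A\,S) \le s_{n}(A\,S)$ when $R$ is merely an isometry with dense range rather than a surjective unitary; but here, since $Y^{\prime}$ is the target and $R\colon Y \to Y^{\prime}$ is orthogonal in the paper's sense (an isometry onto a dense subspace, hence onto $Y^{\prime}$ by completeness once we note its range is closed as an isometric image of a complete space), $R$ is in fact unitary and the equality of singular values is immediate. Alternatively one avoids this discussion entirely by using only the inequality $s_{n}(R\,A\,S) \le \norm{R}{}\,s_{n}(A\,S)$, which needs nothing but $\norm{R}{}\le 1$. I expect no substantive obstacle: the proof is a one-line consequence of the ideal property of singular numbers, and the main task is merely to state that property cleanly and to observe that orthogonality of $R$ makes its norm equal to one.
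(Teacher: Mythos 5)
Your proposal is correct and follows essentially the same route as the paper: the paper likewise invokes the ideal (multiplicativity) property of singular values to get $s_{n}(A^{\prime}) \leq \norm{R}{Y\to Y^{\prime}}\, s_{n}(A)\, \norm{S}{X^{\prime}\to X}$ directly from the factorization $A^{\prime}=R\,A\,S$. Your additional remarks on $\norm{R}{}=1$ and the surjectivity of the orthogonal map are harmless refinements of the same one-line argument.
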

\begin{proof}
Indeed, $A^{\prime} \prec_{R,S} A$  means that there are a bounded linear operator~$S\colon X^{\prime} \to X$ and an orthogonal operator $R\colon Y \to Y^{\prime}$
 such that $A^{\prime} = R\, A\, S$. Then,  by the definition of singular values, we find that
$$
s_{n}(A^{\prime}) \leq \norm{R}{Y
  \to Y^{\prime}} s_{n}(A) \norm{S}{X^{\prime}\to X}\quad (n=1,2,\dots),
$$
which implies that~\eqref{eq:notslower} is valid.
\end{proof}

The converse assertion holds also true.
\begin{prop}\label{prop:decay-rates2}
  Suppose that the compact operators~$A \colon X \to Y$ and $A^{\prime} \colon X^\prime \to Y^\prime$ are such
  that~$s_{n}(A^{\prime}) = \mathcal O(s_{n}(A))$ as~$n\to\infty$.
  Then there exist a bounded linear operator $S\colon X^{\prime} \to X$ and an orthogonal operator
  $R\colon Y \to Y^{\prime}$
 such that~$A^{\prime}  \prec _{R,S}A$.
\end{prop}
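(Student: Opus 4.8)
The plan is to read off the operators $S$ and $R$ directly from the singular value decompositions of $A$ and $A^{\prime}$. Since $A$ is compact, injective, and has dense range in the separable Hilbert space~$Y$, its singular system $\set{(s_{n}(A); u_{n}, v_{n})}_{n\in\nat}$ consists of a complete orthonormal basis $(u_{n})_{n\in\nat}$ of~$X$ and a complete orthonormal basis $(v_{n})_{n\in\nat}$ of~$Y$, with $A u_{n} = s_{n}(A)\, v_{n}$ and $s_{n}(A)>0$ for every~$n$; likewise $A^{\prime}$ has a singular system $\set{(s_{n}(A^{\prime}); u_{n}^{\prime}, v_{n}^{\prime})}_{n\in\nat}$, where $(u_{n}^{\prime})_{n\in\nat}$ is a complete orthonormal basis of~$X^{\prime}$, $(v_{n}^{\prime})_{n\in\nat}$ is a complete orthonormal basis of~$Y^{\prime}$, and $s_{n}(A^{\prime})>0$. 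That there are countably infinitely many singular values in each case is ensured by the ranges being infinite dimensional.

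First I would define $S\colon X^{\prime}\to X$ on the basis $(u_{n}^{\prime})$ by $S u_{n}^{\prime} := \lr{s_{n}(A^{\prime})/s_{n}(A)}\, u_{n}$, extended by linearity and continuity. The hypothesis $s_{n}(A^{\prime}) = \mathcal O(s_{n}(A))$ provides a constant~$C$ and an index~$N$ with $s_{n}(A^{\prime}) \le C\, s_{n}(A)$ for $n\ge N$; as the finitely many remaining ratios are finite, the sequence $\lr{s_{n}(A^{\prime})/s_{n}(A)}_{n\in\nat}$ is bounded, and $S$ is a well-defined bounded operator with $\norm{S}{X^{\prime}\to X} = \sup_{n} s_{n}(A^{\prime})/s_{n}(A) < \infty$. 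By construction, $A S u_{n}^{\prime} = \lr{s_{n}(A^{\prime})/s_{n}(A)}\, A u_{n} = s_{n}(A^{\prime})\, v_{n}$.

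Next I would let $R\colon Y\to Y^{\prime}$ be the unique bounded linear operator with $R v_{n} := v_{n}^{\prime}$ for all~$n$. Since $(v_{n})$ and $(v_{n}^{\prime})$ are complete orthonormal systems, $R$ maps an orthonormal basis of~$Y$ onto an orthonormal basis of~$Y^{\prime}$ and is therefore orthogonal, as required by Definition~\ref{def:ordering}. Combining the two constructions gives $R A S u_{n}^{\prime} = s_{n}(A^{\prime})\, R v_{n} = s_{n}(A^{\prime})\, v_{n}^{\prime} = A^{\prime} u_{n}^{\prime}$ for every~$n$, and since $(u_{n}^{\prime})$ is complete in~$X^{\prime}$ while all operators involved are bounded, $A^{\prime} = R A S$, that is, $A^{\prime} \prec_{R,S} A$.

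The only point that needs a little care — and the place where the standing assumptions are actually used — is the verification that $R$ is genuinely orthogonal and that $S$ is defined on all of~$X^{\prime}$: this relies on the completeness of the singular bases, which follows from injectivity of $A$ and $A^{\prime}$ (so that $(u_{n}), (u_{n}^{\prime})$ span the domains and the singular values never vanish) together with the density of $\range(A)$ in~$Y$ and of $\range(A^{\prime})$ in~$Y^{\prime}$ in these separable spaces (so that $(v_{n}), (v_{n}^{\prime})$ span the targets). Everything else is a routine computation on the respective orthonormal bases.
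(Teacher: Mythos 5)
Your construction is exactly the paper's: the paper sets $S_{\sigma}:=\sum_{i}\sigma_{i}\,u_{i}^{\prime}\otimes u_{i}$ with $\sigma_{i}=s_{i}(A^{\prime})/s_{i}(A)$ and $R:=\sum_{i}v_{i}\otimes v_{i}^{\prime}$ from the two Schmidt representations, which act on the bases precisely as your $S$ and $R$ do. The proposal is correct and essentially identical in approach, with the added (welcome) care of noting that injectivity and dense range guarantee the completeness of the singular systems, which is what makes $R$ a genuine orthogonal map.
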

\begin{proof} By the Schmidt Representation Theorem (SVD) we have
  orthonormal systems $u_{j},v_{j}\;(j\in\nat)$
  and $u_{j}^{\prime},v_{j}^{\prime}\;(j\in\nat)$ such that~$A = \sum_{i=1}^{\infty}s_{i}(A) u_{i} \otimes v_{i}$,
  and~$A^{\prime} = \sum_{i=1}^{\infty}
  s_{i}(A^{\prime})u_{i}^{\prime} \otimes v_{i}^{\prime}$.
  By assumption the sequence $\sigma_{i}:=
  \frac{s_{i}(A^{\prime})}{s_{i}(A)},\ (i\in\nat)$, is bounded.
  Now we can set $S_{\sigma}:= \sum_{i=1}^{\infty} \sigma_{i} u_{i}^{\prime}
  \otimes u_{i}$  and~$R:=  \sum_{i=1}^{\infty} v_{i}\otimes
  v_{i}^{\prime}$. By construction, the operator~$S_{\sigma}$ is
  bounded, and $R$ constitutes an orthogonal mapping. It
  is straightforward to see that~$A^{\prime} = R \, A \, S_{\sigma}$.
\end{proof}
Both the above propositions yield
\begin{corollary}\label{cor:asymp}
  Let~$A$ and~$A^{\prime}$ be compact operators.
  The following assertions are equivalent.
  \begin{enumerate}
  \item We have that~$A^{\prime} \asymp A$.
  \item It holds true that~~$s_{n}(A^{\prime}) \asymp s_{n}(A)$ as~$n\to\infty$.
  \end{enumerate}
\end{corollary}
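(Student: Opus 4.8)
The plan is to obtain the corollary directly from Propositions~\ref{prop:decay-rates1} and~\ref{prop:decay-rates2} by unwinding the two occurrences of `$\asymp$'. Recall from Remark~\ref{rem:strict} that $A^{\prime} \asymp A$ means precisely that both $A^{\prime} \prec A$ and $A \prec A^{\prime}$ hold, and that for positive sequences $s_{n}(A^{\prime}) \asymp s_{n}(A)$ means both $s_{n}(A^{\prime}) = \mathcal O(s_{n}(A))$ and $s_{n}(A) = \mathcal O(s_{n}(A^{\prime}))$ as $n\to\infty$. Since $A$ and $A^{\prime}$ are injective, all of their singular values are strictly positive, so these quotients are well defined and the construction in the proof of Proposition~\ref{prop:decay-rates2} applies without restriction in either direction.

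For the implication (1)$\Rightarrow$(2) I would assume $A^{\prime} \asymp A$ and apply Proposition~\ref{prop:decay-rates1} twice: once to $A^{\prime} \prec A$, yielding $s_{n}(A^{\prime}) = \mathcal O(s_{n}(A))$, and once with the roles of $A$ and $A^{\prime}$ interchanged to $A \prec A^{\prime}$, yielding $s_{n}(A) = \mathcal O(s_{n}(A^{\prime}))$. Together these give $s_{n}(A^{\prime}) \asymp s_{n}(A)$.

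For the converse (2)$\Rightarrow$(1) I would assume $s_{n}(A^{\prime}) \asymp s_{n}(A)$ and apply Proposition~\ref{prop:decay-rates2} twice: from $s_{n}(A^{\prime}) = \mathcal O(s_{n}(A))$ one gets $A^{\prime} \prec A$, and from $s_{n}(A) = \mathcal O(s_{n}(A^{\prime}))$ one gets $A \prec A^{\prime}$, whence $A^{\prime} \asymp A$ by definition.

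There is essentially no obstacle here; the corollary is a purely formal consequence of the two preceding propositions. The only point deserving a line of care is that `$\asymp$' is used in two senses — the equivalence of operators from Remark~\ref{rem:strict} and two-sided asymptotic comparability of sequences — so the translation between them should be made explicit, together with the observation that injectivity of $A$ and $A^{\prime}$ guarantees positivity of all singular values, so that Proposition~\ref{prop:decay-rates2} may be invoked with the two operators in either order.
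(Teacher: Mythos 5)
Your argument is correct and is exactly the route the paper takes: the corollary is stated there as an immediate consequence of Propositions~\ref{prop:decay-rates1} and~\ref{prop:decay-rates2}, applied once in each direction. Your added remark on the two meanings of `$\asymp$' and on the positivity of the singular values is a reasonable clarification but introduces nothing beyond the paper's (essentially omitted) proof.
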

\begin{rem} \label{rem:littleo}
By Definition~\ref{def:ordering} the operator~$A^\prime$ is strictly
more ill-posed than $A$, if~$A^{\prime}  \prec _{R,S}A$, but there are
no corresponding operators $S^\prime$ (bounded linear) and $R^\prime$
(orthogonal) such that $A \prec _{R^\prime,S^\prime} A^\prime$. For compact operators, this is the case
under the stronger assumption
\begin{equation} \label{eq:faster}
s_{n}(A^{\prime}) = \littleo {s_{n}(A)} \quad \mbox{as} \quad n\to\infty.
 \end{equation}
\end{rem}

\section{Composition with non-compact operators} \label{sec:non}

We extend the previous discussion with the following observation.
Suppose that
we have a factorization from the left in the form $A^{\prime} = T A $ or from the right in the form $A^\prime = A\, T$
for a compact operator $A$ and a compact or a bounded non-compact linear operator $T$. Then we find that $A^\prime$ is also compact and obeys the condition
$s_{n}(A^{\prime}) =
\bigo\lr{s_{n}(A)}$ as $n \to \infty$ for the singular values of $A$ and $A^\prime$. Now, Proposition~\ref{prop:decay-rates2} implies
that~$A^{\prime}\prec A$, and there will thus be a
factorization~$A^{\prime} = R A S$ with an orthogonal operator $R$ and a bounded operator $S$.

A typical situation for the factorization from the left is met when the operators
are connected by some bounded but non-compact multiplication operator $T$ as discussed in the following example:
\begin{xmpl}
Let $X=X^\prime=Y=Y^\prime=L_2(0,1)$ and consider compositions $A^\prime =T A$ with non-compact multiplication operators $T:=M$ defined as
 $$(Mx)(t):=f(t)\,x(t)\quad(0 \le t \le 1)$$
and mapping in $L_2(0,1)$. The occurring non-negative multiplier functions $f\in L_\infty(0,1)$ are supposed to possess essential zeros. Such compositions with focus on the simple integration operator $A:=J$ defined as $$(Jx)(s):= \int_0^s x(t) dt\quad(0 \le s \le 1)$$
were considered in~~\cite{HeinHof03,HofWolf05,HofWolf09}. It was shown there that for wide classes of functions $f$, including the
monomials $f(t)=t^\kappa$ for all $\kappa>0$, the decay rates of the
singular values of $J$ and $M J$ coincide, which implies that
$J\asymp MJ$ for such multiplier functions~$f$.
\end{xmpl}

Another aspect for factorizations from the left is highlighted by the
following general assertions, Proposition~\ref{pro:Pietsch}, and its
Corollary~\ref{cor:Pietsch}, here with focus on ill-posed situations.
The subsequent Example~\ref{ex:Hausdorff} below illustrates the situation of the corollary.

First the following result, privately communicated by
A.~Pietsch, seems interesting.
\renewcommand{\theenumi}{(\alph{enumi})}
\begin{prop} \label{pro:Pietsch}
 The following two assertions are equivalent for an arbitrary bounded linear operator $T\colon Y \to Z$.
  \begin{enumerate}
  \item\label{it:ac} There is a constant~$c>0$ such that~$s_{n}(A)\leq c s_{n}(T
    A)$ for all compact linear operators $A\colon X \to Y$, and all~$n=1,2,\dots$.
  \item \label{it:left-inverse}The operator~$T$ admits a bounded left inverse $B \colon Z \to Y$, i.e.,\ $B\, T = I_{Y}$.
  \end{enumerate}
\end{prop}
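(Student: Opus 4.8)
The plan is to treat the two implications separately, with essentially all the content residing in $(a)\Rightarrow(b)$. The implication $(b)\Rightarrow(a)$ is immediate: if $B\colon Z\to Y$ satisfies $B\,T=I_{Y}$, then every compact $A\colon X\to Y$ factors as $A=B\,(TA)$, and the ideal property of the singular numbers gives $s_{n}(A)=s_{n}(B\,(TA))\le \norm{B}{Z\to Y}\,s_{n}(TA)$ for all $n$, so $(a)$ holds with $c=\norm{B}{Z\to Y}$.

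For $(a)\Rightarrow(b)$, the first step is to recognize that it suffices to test the hypothesis on rank-one operators at the index $n=1$. Fixing a unit vector $e\in X$ and an arbitrary $y\in Y$, I would consider $A_{y}:=e\otimes y$, which is compact with $s_{1}(A_{y})=\norm{y}{Y}$, and observe that $T A_{y}=e\otimes(Ty)$, so that $s_{1}(T A_{y})=\norm{Ty}{Z}$. Then $(a)$ applied to $A_{y}$ with $n=1$ reads $\norm{y}{Y}\le c\,\norm{Ty}{Z}$; that is, $T$ is bounded below with constant $c^{-1}$.

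Once $T$ is known to be bounded below, the left inverse is constructed in the usual way: $T$ is injective with closed range $\range(T)\subseteq Z$, the inverse $T^{-1}\colon\range(T)\to Y$ is bounded with $\norm{T^{-1}}{\range(T)\to Y}\le c$, and composing with the orthogonal projection $P\colon Z\to\range(T)$ produces $B:=T^{-1}P\colon Z\to Y$, which is bounded and satisfies $B\,T\,y=T^{-1}P\,Ty=T^{-1}Ty=y$ for all $y\in Y$, hence $B\,T=I_{Y}$. The only place where I expect to have to be slightly careful is this last step, since it uses that the closed subspace $\range(T)$ is orthogonally complemented in $Z$ --- genuinely a Hilbert-space feature, which would fail for a general Banach target. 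It is also worth remarking that the argument proves more than is stated: the seemingly strong hypothesis $(a)$, quantified over all compact $A$ and all $n$, is already forced by its restriction to rank-one operators at $n=1$.
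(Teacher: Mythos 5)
Your proof is correct and follows essentially the same route as the paper: the easy direction via the ideal property of singular numbers, and the converse by testing on rank-one operators at $n=1$ to conclude that $T$ is bounded below, then forming $B=T^{-1}P$ with the orthogonal projection onto the (closed) range. Your explicit observation that boundedness below already closes the range is a slight tidying of the paper's phrasing, but the argument is the same.
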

\begin{proof}
Clearly, item~\ref{it:left-inverse} yields~\ref{it:ac}, because then
  $$
s_{n}(A) = s_{n}(B\, T A) \leq \norm{B}{Z \to Y} s_{n}(T A),\quad n=1,2,\dots.
$$
Next, if item~\ref{it:ac} holds true, then this must hold for
arbitrary rank one operators~$A= x_{0}\otimes y_{0}$, mapping~$x \mapsto \scalar{x}{x_{0}}y_{0},\ x\in X$, and for~$n=1$. In this case the
assumption with~$n=1$ translates to
$$
\norm{x_{0}}{X}\norm{y_{0}}{Y} = s_{1}(x_{0}\otimes y_{0})
\leq c s_{1}(x_{0}\otimes T y_{0} ) = c \norm{ T y_{0}}{Z}\norm{x_{0}}{X}
$$
Since~$x_{0}\neq 0$ and~$y_{0}\neq 0$ may be chosen arbitrary we find
that the operator $T \colon Y \to \range(T)$ is continuously invertible, and the inverse $T^{-1}$ can be continuously extended to $\overline{\range(T)}$. Denoting~$P\colon Z \to \overline{\range(T)}$ the projection, the mapping~$B:= T^{-1}P$
constitutes the left inverse to $T$. The proof is complete.
\end{proof}
Within the present context of ill-posed operator equations this yields the following.
\begin{corollary} \label{cor:Pietsch}
Let~$T \colon Y \to Z$ be an injective bounded linear operator with
  non-closed and dense range. Then for every (arbitrarily large)
  constant $C<\infty$ there are a compact operator $A \colon X \to Y$ and an index~$n$
  such that $s_{n}(A)/s_{n}(T A) \geq C $.
\end{corollary}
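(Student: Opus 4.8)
The plan is to obtain the statement as the contrapositive of Proposition~\ref{pro:Pietsch}, using one elementary observation: an injective bounded operator that admits a bounded left inverse is bounded below, hence has closed range.

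First I would argue by contradiction and suppose that the conclusion fails. Then there is a single finite constant~$C_{0}$ with~$s_{n}(A)/s_{n}(TA) < C_{0}$ for every compact operator~$A\colon X \to Y$ and every~$n$. Since~$T$ is injective we have~$\rk(TA) = \rk(A)$, so~$s_{n}(A)$ vanishes exactly when~$s_{n}(TA)$ does; hence the inequality may be rewritten as~$s_{n}(A) \le C_{0}\, s_{n}(TA)$ for all~$A$ and all~$n$ (and we may assume~$C_{0} > 0$). This is precisely assertion~\ref{it:ac} of Proposition~\ref{pro:Pietsch}, so that proposition furnishes a bounded operator~$B\colon Z \to Y$ with~$B\,T = I_{Y}$.

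Next I would turn this left inverse into a contradiction with the hypothesis on~$\range(T)$. From~$B\,T = I_{Y}$ one gets~$\norm{y}{Y} \le \norm{B}{Z\to Y}\,\norm{Ty}{Z}$ for all~$y \in Y$, i.e.\ $T$ is bounded below; and a bounded-below operator has closed range, since a convergent sequence~$Ty_{k} \to z$ forces~$(y_{k})$ to be Cauchy, with limit~$y$ satisfying~$Ty = z \in \range(T)$. This contradicts the assumed non-closedness of~$\range(T)$, and the corollary follows.

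I do not expect a genuine obstacle here: the whole content is already packed into Proposition~\ref{pro:Pietsch}. The only points needing a little care are the harmless bookkeeping for indices~$n$ exceeding~$\rk(A)$ (so the ratio~$s_{n}(A)/s_{n}(TA)$ is read consistently as~$0/0$) and the remark that negating the quantitative statement produces exactly hypothesis~\ref{it:ac} of Proposition~\ref{pro:Pietsch}; the closing implication ``bounded left inverse~$\Rightarrow$ closed range'' is standard.
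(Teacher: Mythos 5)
Your proof is correct and follows essentially the same route as the paper: the statement is the contrapositive of Proposition~\ref{pro:Pietsch}, and a bounded left inverse of the injective, densely-ranged $T$ would force $\range(T)$ to be closed (the paper phrases this as the left inverse being a genuine bounded inverse, you phrase it as $T$ being bounded below; these are the same observation). Your extra care about the $0/0$ bookkeeping for $n>\rk(A)$ is harmless and does not change the argument.
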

\begin{proof}
Since we assume that~$\overline{\range(T)}=Z$, the existence of a
bounded left inverse $B \colon Z \to Y$ to $T$ actually  requires the
existence of a  bounded
inverse~$T^{-1}$. This contradicts the ill-posedness of $T$ coming from the non-closedness of the range $\range(T)$.
Hence,  item~\ref{it:ac} of Proposition~\ref{pro:Pietsch} is violated. The
assertion of the corollary is a reformulation of the violation of item~\ref{it:ac}.
\end{proof}

\begin{xmpl} \label{ex:Hausdorff}
Here we consider the non-compact ill-posed Hausdorff moment operator $T:=B^{(H)}$.
This operator $B^{(H)} \colon Y=L_2(0,1) \to Z=\ell^2$ is given as
  \begin{equation} \label{eq:Haus}
[B^{(H)}z]_j:= \int_0^1 t^{j-1}z(t)dt \qquad (j=1,2,\dots).
\end{equation}
In the composition $A^{\prime} := B^{H}\circ J$ with the simple integration operator $A:=J$ acting in
$X=Y=L_2(0,1)$ the situation of Corollary~\ref{cor:Pietsch} is met.
Indeed, it was shown in \cite{HofMat22} that we actually have that $s_{n}(A^{\prime}) =
\littleo{s_{n}(J)}$ as $n\to\infty$, and hence $B^{H}\circ J$
is strictly more ill-posed than $J$.

Also for the composition of the Hausdorff moment operator  $T:=B^{(H)}$ with the compact embedding operator $A:=E^k \colon H^k(0,1) \to
  L_2(0,1)$, mapping from the Hilbertian Sobolev spaces $X=H^k(0,1)\;(k=1,2,...)$ to
  $Y=L_2(0,1)$, the situation of Corollary~\ref{cor:Pietsch} occurs.
In both cases, $B^{(H)}J$ and $B^{(H)} E^k$, it is still an open problem
whether the composition operator has power type or exponential
decay of the singular values.

Subsequently, a series of studies turned to such questions, see e.g.~\cite{DFH24}, and references therein.
\end{xmpl}

As we have seen, Definition~\ref{def:ordering} also applies in combination with non-compact bounded linear operators. A direct comparison is in the focus of the following proposition. It will highlight the fact that bounded non-compact linear operators with non-closed ranges can never be more ill-posed in the sense of Definition~\ref{def:ordering} than compact linear operators.

\begin{prop} \label{pro:comnoncom}
  Suppose that~$A$ is a \emph{non-compact} bounded linear operator with non-closed range,
  and that~$A^{\prime}$ a \emph{compact} linear operator with infinite dimensional range.
  Then we  have that~$A \not \prec A^\prime$. If, however, $A^{\prime} \prec A$ holds, then $A^\prime$ is even \emph{strictly more ill-posed} than $A$.
\end{prop}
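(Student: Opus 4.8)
The plan is to exploit the ideal property of the class of compact linear operators: whenever $K$ is compact and $B, C$ are bounded linear operators acting between appropriate Hilbert spaces, the composition $B\,K\,C$ is again compact. This is essentially the only structural fact that the proof needs.

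First I would establish the claim $A \not\prec A^{\prime}$. Arguing by contradiction, suppose that $A \prec_{R,S} A^{\prime}$ for some orthogonal operator $R$ and some bounded linear operator $S$, so that $A = R\, A^{\prime}\, S$. Since $A^{\prime}$ is compact and both $R$ and $S$ are bounded, the ideal property forces $A$ to be compact, contradicting the hypothesis that $A$ is non-compact. Hence no such pair $(R,S)$ exists, i.e., $A \not\prec A^{\prime}$. One should note that neither the non-closedness of $\range(A)$ nor the infinite-dimensionality of $\range(A^{\prime})$ enters here; only non-compactness of $A$ is used, and the orthogonality of $R$ is irrelevant — mere boundedness of both flanking operators already suffices.

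For the second assertion, assume in addition that $A^{\prime} \prec A$ does hold. Then we have $A^{\prime} \prec A$ together with $A \not\prec A^{\prime}$ from the first part, and by the terminology fixed in Remark~\ref{rem:strict} this is exactly the statement that $A^{\prime}$ is strictly more ill-posed than $A$. This completes the proof.

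I do not anticipate a genuine obstacle: the argument reduces to a one-line application of the operator-ideal property of compact operators. The only subtlety worth flagging for the reader is the observation made above — that in the first part the assumption on $R$ being orthogonal plays no role, since boundedness of $R$ and $S$ is enough to propagate compactness from $A^{\prime}$ to $A = R\,A^{\prime}\,S$.
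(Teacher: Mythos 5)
Your proof is correct and coincides with the paper's own argument: both parts rest on the operator-ideal property of the compact operators to rule out $A \prec A^{\prime}$, and the second assertion then follows from the terminology of Remark~\ref{rem:strict}. Your added observation that only boundedness (not orthogonality) of $R$ is needed is accurate and harmless.
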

\begin{proof}
Suppose to the contrary that~$A \prec A^\prime$ holds true, and hence
a factorization~$A = R^\prime A^\prime S^\prime$ exists. The family of
compact operators constitutes an operator ideal, and hence the
composition~$R^\prime A^\prime S^\prime$ will be a compact operator,
which contradicts the assumption. The second assertion holds
  true because under the made assumptions we cannot have that~$A \asymp A^{\prime}$.
\end{proof}

The partial ordering of Definition~\ref{def:ordering}
  characterizes compact linear operators $A^\prime$ as strictly more ill-posed
  than non-compact bounded linear operators $A$ whenever $A^\prime$ and $A$ are comparable, see in this context also Remark~\ref{rem:ri}. However, a comparison with respect to the strength of ill-posedness of compact and ill-posed non-compact operators has
  various facets and was discussed contradictorily in the
  literature. As mentioned in the introduction,  Nashed's  opposite claim
in \cite[p.55]{Nashed86} is rather problematic due to occurring incomparability phenomena.
 We refer in this context also to the study~\cite{Hofkind10}, where it has been shown that compact and non-compact operators are fundamentally different with respect to projections into finite dimensional spaces. The reason for this seems to be that sequences of compact operators can never converge in norm to a non-compact operator. On the other hand, we note that a comparison with respect to the strength ill-posedness of two compact operators $A$ and $A^\prime$ with each other is consistent along the lines of Section~\ref{sec:ordering-compact} and Definition~\ref{def:ordering}. Also the comparison of two ill-posed bounded non-compact operators $A$ and $A^\prime$ with each other is possible by Halmos' spectral theorem when unitary transformations lead to a common measure space for both operators. This will be outlined in detail below in Section~\ref{sec:spectral}. Alternatively, the comparison of two ill-posed non-compact operators can also be done in a consistent manner by comparing the growth rate of distribution functions (see~\cite{WH24}) or by comparing the decay rate of decreasing rearrangements (see~\cite{MNH22}).
 Both alternative approaches exploit the spectral theorem in multiplication operator form as well.

\section{Range inclusions yield ordering}
\label{sec:inclusions}

A further strong motivation for Definition~\ref{def:ordering} is due to range inclusions as a tool for comparing the ill-posedness of two operators $A$ and $A^\prime$.
It can be expected that \emph{smaller ranges of operators} $A^\prime$ compared to $A$ \emph{indicate a higher degree of ill-posedness}.
A stringent justification of this  comes from Douglas'
\pmathe{Range Inclusion Theorem~\cite{MR0203464}. Here we sketch the
  main facts with slightly different notation.}
\begin{thm}\label{thm:Douglas-original}
  Let~$H$ and~$G$ be bounded linear operators in some Hilbert
  space~$\mathcal H$. The
  following assertions are equivalent.
  \begin{enumerate}
  \item $\mathcal R(H) \subset \mathcal R(G)$;
  \item For some~$\lambda \geq 0$ we have that~$H H^{\ast} \leq G
    G^{\ast}$;
  \item There exists a bounded operator~$C$ with~$\norm{C}{}\leq
    \lambda$ on~$\mathcal H$ such that~$H = G C$.
  \end{enumerate}
\end{thm}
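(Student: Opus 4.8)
The plan is to prove the three-way equivalence of Douglas' Range Inclusion Theorem by establishing the cycle of implications $(3)\Rightarrow(2)\Rightarrow(1)$ (both routine) and then the substantive implication $(1)\Rightarrow(3)$. For $(3)\Rightarrow(2)$: if $H=GC$ with $\norm{C}{}\le\lambda$, then $HH^\ast = GCC^\ast G^\ast \le \lambda^2\, GG^\ast$, since $CC^\ast \le \norm{C}{}^2 I \le \lambda^2 I$ and conjugation by $G$ preserves operator inequalities; this also pins down the constant in assertion (2) as $\lambda^2$ (the statement as written is slightly informal about which $\lambda$ appears where, but the proof makes it precise). For $(2)\Rightarrow(1)$: the inequality $HH^\ast \le \lambda^2 GG^\ast$ gives $\norm{H^\ast x}{}\le \lambda\norm{G^\ast x}{}$ for all $x\in\mathcal H$, which says exactly that the map $G^\ast x \mapsto H^\ast x$ is well-defined and bounded on $\range(G^\ast)$; dualizing this bound and using $\range(H)^\perp = \ker(H^\ast)$ yields $\range(H)\subset\overline{\range(G)}$, and a short closed-graph / orthogonal-complement argument upgrades this to $\range(H)\subset\range(G)$.

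For the hard direction $(1)\Rightarrow(3)$, I would argue as follows. Assuming $\range(H)\subset\range(G)$, for each $x\in\mathcal H$ there is some $z$ with $Hx = Gz$; choosing $z$ of minimal norm (equivalently $z\in\ker(G)^\perp = \overline{\range(G^\ast)}$) makes $z$ unique, and we define $Cx := z$, so $GC = H$ by construction. Linearity of $C$ is immediate from uniqueness of the minimal-norm solution. The one real point is boundedness of $C$, and here the natural tool is the closed graph theorem: suppose $x_n\to x$ and $Cx_n \to w$; then $Hx_n \to Hx$ and $Hx_n = GCx_n \to Gw$ by continuity of $G$, so $Hx = Gw$; moreover each $Cx_n$ lies in the closed subspace $\overline{\range(G^\ast)}$, hence so does $w$, and since $Cx$ is \emph{the} element of that subspace mapped by $G$ to $Hx$, we get $w = Cx$. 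Thus $C$ has closed graph and is therefore bounded.

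The main obstacle — and the step that deserves the most care in the write-up — is the boundedness/closedness argument in $(1)\Rightarrow(3)$: one must be disciplined about restricting to $\ker(G)^\perp$ so that $C$ is single-valued, and one must verify that this subspace is closed (it is, being an orthogonal complement) so that limits of $Cx_n$ stay inside it; without that, the closed-graph argument fails to identify $w$ with $Cx$. A clean alternative that sidesteps the closed graph theorem is to note that $(2)$ already exhibits the bounded operator $G^\ast x\mapsto H^\ast x$ on $\overline{\range(G^\ast)}$ with norm $\le\lambda$; extending it by zero on $\ker(G)$ and taking the adjoint produces the desired $C$ directly with $\norm{C}{}\le\lambda$, which has the bonus of matching the constant across all three assertions. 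I would present the closed-graph route as the main line and remark on the adjoint route as it makes the quantitative bound transparent.

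Finally, a brief remark would record how this specializes to our setting: taking $\mathcal H$ to accommodate both operators, the equivalence $(1)\Leftrightarrow(3)$ says precisely that a range inclusion $\range(A^\prime)\subset\range(A)$ is the same as a factorization $A^\prime = A C$ with $C$ bounded, i.e.\ a factorization from the right in the sense of Section~\ref{sec:ordering}, hence $A^\prime \prec A$; this is the link between range inclusions and the partial ordering that the section sets out to establish.
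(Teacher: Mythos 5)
The paper does not actually prove this theorem: it is quoted verbatim (up to notation) from Douglas' 1966 paper \cite{MR0203464} and used as a black box, so there is no internal proof to compare against. Your proposal is, in substance, Douglas' original argument, and the hard direction $(1)\Rightarrow(3)$ is handled correctly: the minimal-norm selection forces $Cx\in\ker(G)^{\perp}$, uniqueness there gives linearity and single-valuedness, and the closed-graph argument is carried out with the necessary care (limits of $Cx_{n}$ stay in the closed subspace $\ker(G)^{\perp}$, so $w$ is identified with $Cx$). You are also right that item (2) as printed in the paper is missing the constant; it should read $HH^{\ast}\leq\lambda^{2}GG^{\ast}$, and your $(3)\Rightarrow(2)$ makes the correct quantitative statement.

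The one soft spot is your main-line $(2)\Rightarrow(1)$. From $\norm{H^{\ast}x}{}\leq\lambda\norm{G^{\ast}x}{}$, dualizing only gives $\range(H)\subset\overline{\range(G)}$ directly, and there is no generic ``short closed-graph / orthogonal-complement argument'' that upgrades inclusion in the closure to inclusion in the range -- that implication is false without the quantitative bound, and using the bound is precisely the content of the theorem. The honest routes are: (i) for fixed $x_{0}$, the functional $G^{\ast}x\mapsto\scalar{H^{\ast}x}{x_{0}}$ is bounded on $\range(G^{\ast})$ by $\lambda\norm{x_{0}}{}\norm{G^{\ast}x}{}$, so Riesz representation on $\overline{\range(G^{\ast})}$ produces $z$ with $Gz=Hx_{0}$; or (ii) the adjoint construction you mention as an ``alternative,'' which extends $G^{\ast}x\mapsto H^{\ast}x$ by zero on $\ker(G)$ and takes adjoints, proving $(2)\Rightarrow(3)$ directly with $\norm{C}{}\leq\lambda$, after which $(3)\Rightarrow(1)$ is trivial. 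I would promote (ii) from a remark to the main line and drop the claimed upgrade; with that reorganization the cycle closes cleanly and the constant $\lambda$ is tracked through all three assertions.
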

In a note added to the proof the author mentions that the results
extends to operators acting from different Hilbert spaces into the
common range space~$\mathcal H$.

In light of this theorem, and using the recent
formulation in~\cite{MR3985479}, we can formulate the following
result.
\begin{thm}\label{thm:Douglas}
  The following assertions are equivalent:
\begin{enumerate}
\item\label{it:a} There exists an orthogonal
 mapping~$R\colon Y\to Y^{\prime}$, for which the range inclusion
 $$
R^{\ast} \;\mathcal R(A^{\prime})=\mathcal R (R^{\ast}A^{\prime})  \subset \mathcal R(A)
$$
is satisfied.
\item\label{it:b} There is a constant $0 \le C<\infty$ such that, for all $y \in Y$,
  $$\|\lr{A^{\prime}}^{\ast} R y \|\leq C \|A^{\ast}y \|.$$
\item\label{it:c} There exist a bounded linear operator $S\colon X^{\prime}\to X$ and an orthogonal operator $R\colon Y \to Y^{\prime}$ such that
$A^\prime$ obeys the factorization~$A^{\prime} = R \, A\, S $, which means $A^{\prime} \prec_{R,S} A$ in the sense of Definition~\ref{def:ordering}.
\end{enumerate}
\end{thm}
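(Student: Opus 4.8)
The plan is to reduce Theorem~\ref{thm:Douglas} to the classical Range Inclusion Theorem, Theorem~\ref{thm:Douglas-original}, in the version --- pointed out by Douglas himself in the note following that statement --- that allows the two operators to act from \emph{different} Hilbert spaces into one \emph{common} target space. To this end I first fix an orthogonal mapping $R\colon Y\to Y^{\prime}$ and set
\[
H := R^{\ast}A^{\prime}\colon X^{\prime}\to Y,\qquad G := A\colon X\to Y,
\]
so that $H$ and $G$ share the target space $Y$. The decisive observation is that, for this fixed $R$, each of the three assertions \ref{it:a}, \ref{it:b}, \ref{it:c} is, after an elementary rewriting, literally one of the three equivalent items of Theorem~\ref{thm:Douglas-original} applied to the pair $(H,G)$. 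Since each of \ref{it:a}, \ref{it:b}, \ref{it:c} is to be read as ``there exists an orthogonal $R$ for which the stated property holds'', it then suffices to establish the equivalence for every fixed $R$, and the full equivalence follows by quantifying over $R$.

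The rewriting is the routine core. For \ref{it:a}: since $R$ is orthogonal, $R^{\ast}$ is a bijective isometry, whence $R^{\ast}\range(A^{\prime}) = \range(R^{\ast}A^{\prime}) = \range(H)$, so \ref{it:a} says precisely $\range(H)\subset\range(G)$, the range-inclusion item of Theorem~\ref{thm:Douglas-original}. For \ref{it:b}: we have $\lr{A^{\prime}}^{\ast}R = \lr{R^{\ast}A^{\prime}}^{\ast} = H^{\ast}$, so \ref{it:b} reads $\norm{H^{\ast}y}{}\le C\norm{G^{\ast}y}{}$ for all $y\in Y$; squaring and using $\norm{H^{\ast}y}{}^{2} = \scalar{HH^{\ast}y}{y}$ and $\norm{G^{\ast}y}{}^{2} = \scalar{GG^{\ast}y}{y}$ turns this into the operator inequality $HH^{\ast}\le C^{2}GG^{\ast}$, the operator-domination item of Theorem~\ref{thm:Douglas-original}. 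For \ref{it:c}: because $R^{\ast}R = I_{Y}$ and $RR^{\ast} = I_{Y^{\prime}}$, the equation $A^{\prime} = RAS$ is equivalent to $R^{\ast}A^{\prime} = AS$, i.e.\ to $H = GS$ with bounded $S\colon X^{\prime}\to X$, which is the factorization item of Theorem~\ref{thm:Douglas-original} (its operator ``$C$'' being our $S$). Invoking Theorem~\ref{thm:Douglas-original} for $(H,G)$ --- admissible precisely because of the common-target-space extension --- closes the chain \ref{it:a}$\Leftrightarrow$\ref{it:b}$\Leftrightarrow$\ref{it:c} for the fixed $R$, hence in general.

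Only bookkeeping, not a genuine obstacle, requires attention. One must use that an orthogonal $R\colon Y\to Y^{\prime}$ is a surjective isometry, so that \emph{both} $R^{\ast}R = I_{Y}$ and $RR^{\ast} = I_{Y^{\prime}}$ are at hand; the first is needed for $A^{\prime} = RAS\Rightarrow R^{\ast}A^{\prime} = AS$, the second for the converse. One should also keep track that the passage from $(A,A^{\prime})$ to $(G,H)$ alters the source spaces ($X$ versus $X^{\prime}$) but not the target, which is exactly the regime of Douglas' added remark; the constant $\lambda$ there corresponds to the constant $C$ in \ref{it:b} and to the bound $\norm{S}{X^{\prime}\to X}$ in \ref{it:c}. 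Beyond this, the substance of the theorem is carried entirely by Theorem~\ref{thm:Douglas-original}, and the present contribution is the remark that inserting an orthogonal factor $R$ and quantifying over it is exactly what Definition~\ref{def:ordering} encodes.
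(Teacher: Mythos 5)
Your proposal is correct and follows essentially the same route as the paper: both set $H:=R^{\ast}A^{\prime}$, $G:=A$ and reduce the three items to the three items of Theorem~\ref{thm:Douglas-original} in its common-target-space form, using orthogonality of $R$ to pass between $A^{\prime}=RAS$ and $R^{\ast}A^{\prime}=AS$. Your version merely spells out a few steps the paper leaves implicit (the squaring argument identifying item~(2) with $HH^{\ast}\le C^{2}GG^{\ast}$, and the quantification over $R$).
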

\begin{proof}
\pmathe{We apply the original theorem to the operators~$H:= R^{\ast}A^{\prime}$,
and~$G:= A$, both sharing the same range space~$Y$, respectively.} Since~$R$ was orthogonal, the adjoint mapping
is~$\lr{A^{\prime}}^{\ast} R$, which shows the equivalence of
item~\ref{it:a} and~\ref{it:b}. \pmathe{The norm bound is a reformulation of the
  corresponding item in Theorem~\ref{thm:Douglas-original}.}
Again, from the original
formulation we find a factor, say~$S$, such that~$R^{\ast}A^{\prime} =
A S$. Using the orthogonality once more this yields~$A^{\prime} = R A
S$, which is the assertion from item~\ref{it:c}, and the proof is
completed.

\end{proof}

The following corollary characterizes the special case, when the image
spaces of $A$ and $A^\prime$ \pmathe{are nested}, i.e.~when~\pmathe{$Y^\prime := \overline{\mathcal
R(A^{\prime})}\subset Y$}.
\renewcommand{\theenumi}{(\roman{enumi})}
\begin{corollary} \label{cor:comnoncom}
Let $A \colon X \to Y$ and $A^\prime \colon X^\prime \to Y^{\prime}
\subset Y$ be both
bounded linear operators with non-closed ranges.
The following assertions are equivalent.
\begin{enumerate}
\item There is a range inclusion~$\mathcal R (A^{\prime}) \subset
  \mathcal R (A)$.
\item There is a bounded  linear operator $S:X \to X^\prime$ such that the factorization $A^\prime= A\,S$ holds true.
\end{enumerate}
In either case we have that~$A^\prime \prec_{I_{Y},S} A$.
\end{corollary}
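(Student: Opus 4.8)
The plan is to derive this corollary as the special case of Theorem~\ref{thm:Douglas} in which the orthogonal operator $R$ may be taken to be the identity. The one preliminary observation is a bookkeeping matter: since $Y^{\prime}=\overline{\mathcal R(A^{\prime})}\subseteq Y$, I would first regard $A^{\prime}$ as a bounded linear operator from $X^{\prime}$ into the common target space $Y$, i.e.\ compose it with the inclusion $Y^{\prime}\hookrightarrow Y$. After this identification the identity $I_{Y}$ is trivially an orthogonal operator on $Y$, and it is the natural candidate for $R$ in Definition~\ref{def:ordering}.

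The implication (ii)$\Rightarrow$(i) is immediate: if $A^{\prime}=A\,S$ with $S\colon X^{\prime}\to X$ bounded, then $\mathcal R(A^{\prime})=A\,(S(X^{\prime}))\subseteq A(X)=\mathcal R(A)$. Moreover the very same factorization reads $A^{\prime}=I_{Y}\,A\,S$, which is exactly the factorization required in Definition~\ref{def:ordering} with the orthogonal operator $R=I_{Y}$ and the bounded operator $S$; hence $A^{\prime}\prec_{I_{Y},S}A$. This simultaneously settles the concluding clause ``in either case we have $A^{\prime}\prec_{I_{Y},S}A$'', once (i)$\Rightarrow$(ii) has been shown.

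For the substantive direction (i)$\Rightarrow$(ii) I would apply Theorem~\ref{thm:Douglas} with the choice $R:=I_{Y}$, which is admissible because after the identification above both $A$ and $A^{\prime}$ act into $Y$. With this $R$ one has $R^{\ast}=I_{Y}$, so item~\ref{it:a} of that theorem becomes $\mathcal R(A^{\prime})\subseteq\mathcal R(A)$, which is precisely hypothesis~(i). Therefore item~\ref{it:c} of Theorem~\ref{thm:Douglas} holds; and tracking its proof --- which, for the given $R$, extracts a bounded factor $S$ with $R^{\ast}A^{\prime}=A\,S$ --- yields $A^{\prime}=I_{Y}\,A\,S=A\,S$ with $S\colon X^{\prime}\to X$ bounded, as claimed. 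Alternatively, one may apply the original Range Inclusion Theorem~\ref{thm:Douglas-original}, in the version mentioned there for operators from different Hilbert spaces into one common range space, directly to $H:=A^{\prime}$ and $G:=A$: the inclusion $\mathcal R(H)\subseteq\mathcal R(G)$ provides a bounded $C$ with $H=G\,C$, and $S:=C$ does the job.

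I do not expect a genuine obstacle here: the result is an immediate specialization of Theorem~\ref{thm:Douglas} (respectively of Douglas' Range Inclusion Theorem). The only point that must be handled with a little care is the change of target space for $A^{\prime}$ from $Y^{\prime}$ to $Y$, since it is exactly this step that legitimises the use of a common range space and the choice $R=I_{Y}$; apart from that, everything reduces to the triviality $\mathcal R(A\,S)\subseteq\mathcal R(A)$.
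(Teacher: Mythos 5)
Your proposal is correct and follows exactly the route the paper intends: the corollary is the specialization of Theorem~\ref{thm:Douglas} (equivalently, of Douglas' Range Inclusion Theorem in its different-domain version) obtained by viewing $A^{\prime}$ as mapping into $Y$ via the inclusion $Y^{\prime}\hookrightarrow Y$ and taking $R=I_{Y}$, with (ii)$\Rightarrow$(i) being the trivial inclusion $\mathcal R(A\,S)\subset\mathcal R(A)$. You also, rightly, read the bounded factor as $S\colon X^{\prime}\to X$ (the direction required for $A^{\prime}=A\,S$ to be well defined), quietly correcting the typo in the statement.
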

In particular we see that factorizations from the right always yield comparison.

\begin{rem} \label{rem:ri}
Proposition~\ref{pro:comnoncom} yields that  the range inclusion~$R (A^{\prime}) \subset \mathcal R(A)$ cannot hold when $A^\prime$ is non-compact, but $A$ is compact. The interplay of compact and non-compact operators possessing $\mathcal R(A^\prime) \subset \mathcal R(A)$ has been discussed in \cite[Example~10.2]{BHTY06},
where also a concrete example of a compact operator $A^\prime$ is
presented that is strictly more ill-posed than a concrete non-compact
operator $A$. Such situations of comparable compact and non-compact operators are typical for factorizations from the right when the compact operator $A^\prime$ is factorized as $A^\prime=A \circ T$ with an ill-posed non-compact bounded operator $A$. Here, $T$ is mostly a compact operator
(e.g. $A^\prime = B^{(H)} \circ J$ in Example~\ref{ex:Hausdorff}), but $T$ can also be a non-compact operator as comprehensively discussed in \cite{KinHof24}.

If the ranges coincide, i.e.~$R (A^{\prime}) = \mathcal
R(A)$, then by virtue of Theorem~\ref{thm:Douglas} we have
that~$A^{\prime} \asymp A$, and in particular for compact operators that $s_n(A^\prime) \asymp s_n(A)$
for $n \to \infty$, by Corollary~\ref{cor:asymp}. Note that the
pre-image spaces $X$ and $X^\prime$ in this context can be very
different.

Unfortunately, if $R (A^{\prime})$ is a \emph{proper subset} of $\mathcal R(A)$, then one cannot conclude that $A^\prime$ is strictly more ill-posed
than $A$, and we refer to Lemma~\ref{lem:codim} below for counterexamples.
\end{rem}

\begin{lem} \label{lem:codim}
Let for the compact operators with infinite dimensional range $A
\colon X \to Y$ and $A^\prime \colon X^\prime \to \pmathe{Y^{\prime}:=\overline{\mathcal
R(A^\prime)}\subset Y}$ hold that the range $Y^{\prime}$ is a $m$-codimensional subspace of $Y$ with $m \in \nat$. Moreover assume that
the singular values of $A$ satisfy the condition
\begin{equation} \label{eq:s2}
 s_{2n}(A)/s_n(A) \ge \underline C \quad \mbox{for all}\; n \in \nat \;\; \mbox{and some constant} \; \underline C>0.
\end{equation}
  Then the decay rates to zero of the singular values of $A$ and
  $A^\prime$ coincide, i.e.~$s_n(A^\prime) \asymp s_n(A)$ as $n \to
  \infty$, and hence~$A^{\prime}\asymp A$.
\end{lem}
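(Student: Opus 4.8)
The plan is to reduce everything to Corollary~\ref{cor:asymp} by showing that $s_n(A^\prime)\asymp s_n(A)$, and the strategy hinges on exploiting the $m$-codimensionality together with the ``doubling'' condition~\eqref{eq:s2}. First I would fix an orthonormal basis $\set{e_1,\dots,e_m}$ of the orthogonal complement $Y\ominus Y^\prime$, so that $Y = Y^\prime \oplus \operatorname{span}\set{e_1,\dots,e_m}$. The direction $A^\prime\prec A$ (equivalently $s_n(A^\prime)=\bigo(s_n(A))$) is the easy half: composing with the inclusion $J_{Y^\prime}^{Y}\colon Y^\prime\hookrightarrow Y$ we may regard $A^\prime$ as an operator into $Y$ with range contained in $Y^\prime\subset\overline{\range(A)}$, and then I would invoke Douglas (Theorem~\ref{thm:Douglas-original}, or directly Corollary~\ref{cor:comnoncom}) to get a bounded $S$ with $A^\prime = A S$, whence $s_n(A^\prime)\le\norm{S}{}\,s_n(A)$ for all $n$.

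The substantive direction is $A\prec A^\prime$, i.e.\ $s_n(A)=\bigo(s_n(A^\prime))$, and here is where the hypotheses are used. The idea is to build a bounded operator $\widetilde A\colon X\to Y^\prime$ whose range is dense in $Y^\prime$ and whose singular values satisfy $s_n(\widetilde A)\asymp s_n(A)$; then, since $\range(\widetilde A)$ is dense in $Y^\prime=\overline{\range(A^\prime)}$, the ranges $\range(\widetilde A)$ and $\range(A^\prime)$ are both dense in the same space $Y^\prime$, so by Theorem~\ref{thm:Douglas} (applied with target space $Y^\prime$, taking $R$ the identity) — wait, Douglas needs genuine inclusion, not density; so instead I would argue on singular values directly. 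Concretely, using the SVD $A=\sum_i s_i(A)\, u_i\otimes v_i$, I would define $\widetilde A$ by replacing each $v_i\in Y$ with its orthogonal projection $P v_i$ onto $Y^\prime$; the point is that a rank-$m$ perturbation of the system $\set{v_i}$ still spans a codimension-$\le m$-correction, so by the minimax characterization of singular values one gets $s_{n+m}(\widetilde A)\le s_n(A)$ and $s_n(\widetilde A)\ge s_{n}(A)$ is false in general — rather $s_{n}(A)\le s_{n-m}(\widetilde A)$ up to the boundary. Combining $s_{n+m}(\widetilde A)\le s_n(A)\le s_{n-m}(\widetilde A)$ with the doubling condition~\eqref{eq:s2} — which for $n\ge 2m$ forces $s_{n-m}(A)\le \underline C^{-1} s_{2(n-m)}(A)\le \underline C^{-1} s_n(A)$, and similarly a two-sided control of $s_{n\pm m}(A)$ against $s_n(A)$ by iterating — yields $s_n(\widetilde A)\asymp s_n(A)$. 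Since $\widetilde A$ and $A^\prime$ both have dense range in $Y^\prime$, I then apply Proposition~\ref{prop:decay-rates2}/Corollary~\ref{cor:asymp} after first noting $\widetilde A\asymp A^\prime$: actually the cleanest route is to observe $\range(\widetilde A)$ need not equal $\range(A^\prime)$, so I would instead show $s_n(A^\prime)\asymp s_n(\widetilde A)$ by a symmetric construction, or — better — simply establish the two singular-value estimates $s_n(A^\prime)=\bigo(s_n(A))$ (done above) and $s_n(A)=\bigo(s_n(\widetilde A))=\bigo(s_n(A^\prime))$, the latter because $\widetilde A$ is, up to orthogonal identification of $Y^\prime$, a rank-$m$-type modification of $A^\prime$ sharing its dense range, so Corollary~\ref{cor:comnoncom} gives $\widetilde A = A^\prime S^\prime$ hence $s_n(\widetilde A)\le\norm{S^\prime}{}\,s_n(A^\prime)$.

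Let me streamline: the key bridging fact is that $\range(\widetilde A)\subset Y^\prime = \overline{\range(A^\prime)}$ is \emph{not} automatically $\subset\range(A^\prime)$, so Douglas does not apply to the pair $(\widetilde A, A^\prime)$ directly. The honest resolution is to run the codimension argument on \emph{both sides}: construct $\widetilde A$ from $A$ as above to get $s_n(A)\asymp s_n(\widetilde A)$ with $\widetilde A$ dense-range into $Y^\prime$, and separately embed $A^\prime$ into $Y$ to get $s_n(A^\prime)=\bigo(s_n(A))$; then it remains only to prove $s_n(A)=\bigo(s_n(A^\prime))$. For this I would use that the range of $A^\prime$ is dense in $Y^\prime$ while $\range(A)$ meets $Y^\prime$ in a codimension-$\le m$ way, and apply the minimax principle once more: $s_n(A^\prime) = \inf_{\dim M < n}\ \sup_{x\perp M,\ \norm{x}{}=1}\norm{A^\prime x}{}$ can be compared to the analogous quantity for $\widetilde A$ by observing that $\widetilde A^\ast$ and $(A^\prime)^\ast$ have ranges that are both dense subspaces of $X$-side images, so $\norm{(A^\prime)^\ast y}{}$ and $\norm{\widetilde A^\ast y}{}$ are equivalent on the unit sphere of $Y^\prime$ up to the finite-rank defect — this equivalence, again combined with~\eqref{eq:s2} to absorb index shifts by $m$, closes the loop. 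I expect the \textbf{main obstacle} to be exactly this last point: controlling $s_n(A)$ from above by $s_n(A^\prime)$ when we only know the ranges are \emph{densely} contained rather than genuinely nested, so that Douglas is unavailable and one must argue purely through the minimax characterization of singular values, using~\eqref{eq:s2} in an essential way to neutralize the unavoidable shift of the index by the codimension $m$. The doubling condition is precisely what makes $s_{n-m}(A)$, $s_n(A)$, and $s_{n+m}(A)$ mutually comparable, so once the minimax estimates are in place the conclusion $s_n(A^\prime)\asymp s_n(A)$, and hence $A^\prime\asymp A$ by Corollary~\ref{cor:asymp}, follows.
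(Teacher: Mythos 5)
You correctly identify the paper's central device: with $Q$ (your $P$) the orthogonal projection of $Y$ onto $Y^{\prime}$, the operator $(I-Q)A$ has rank at most $m$, so the additivity properties of singular numbers give $s_{n+m}(A)\le s_{n}(QA)+s_{m+1}((I-Q)A)=s_{n}(QA)$, and the doubling condition~\eqref{eq:s2} is exactly what absorbs the resulting index shift. Your operator $\widetilde A=\sum_{i}s_{i}(A)\,u_{i}\otimes Pv_{i}$ is nothing but $QA$, so this half of your argument coincides with the paper's proof. The genuine gap is the bridge $s_{n}(QA)=\bigo(s_{n}(A^{\prime}))$, which you yourself single out as ``the main obstacle'' and then do not close. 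Your proposed resolution --- that $\norm{(A^{\prime})^{\ast}y}{}$ and $\norm{\widetilde A^{\ast}y}{}$ are ``equivalent on the unit sphere of $Y^{\prime}$ up to the finite-rank defect'' because both operators have dense range in $Y^{\prime}$ --- is not a valid argument: two compact operators with dense range in the same space can have arbitrarily different singular value asymptotics (e.g.\ $s_{n}(A^{\prime})=e^{-n}$ with $\overline{\range(A^{\prime})}=Y^{\prime}$ is perfectly compatible with all the density statements you use, while $s_{n}(A)\asymp n^{-1}$ satisfies~\eqref{eq:s2}), so no minimax manipulation starting only from density can yield the required upper bound.

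The paper closes this step by a range inclusion: it asserts $\mathcal R(QA)\subset\mathcal R(A^{\prime})$ ``by construction'' and then applies Corollary~\ref{cor:comnoncom} together with Proposition~\ref{prop:decay-rates1} to obtain $s_{n}(QA)\le\hat C\,s_{n}(A^{\prime})$; the reverse bound $s_{n}(A^{\prime})\le\overline C\,s_{n}(A)$ likewise comes from the range inclusion $\mathcal R(A^{\prime})\subset\mathcal R(A)$, which is implicit in the setting of the lemma (cf.\ Remark~\ref{rem:ri}). This also exposes a second flaw in your write-up: for the ``easy half'' you invoke Douglas from $\mathcal R(A^{\prime})\subset Y^{\prime}\subset\overline{\range(A)}$, but Douglas' theorem requires inclusion in the \emph{range}, not in its closure --- precisely the distinction you correctly insist on later for the other direction. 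In short, both halves of the comparison rest on genuine range inclusions relating $A^{\prime}$, $QA$ and $A$; since your proposal replaces these by density statements, neither of the two bounds involving $A^{\prime}$ is actually established.
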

\begin{proof}
Let $Q$ denote the orthogonal projection  \pmathe{from $Y$ onto the
$m$-codim\-ensional subspace $Y^{\prime}\subset Y$}.
We see that $A = (I - Q) A + Q \,A$, and the calculus with singular values provides us with the estimate
$$
s_{m+ n}(A) \leq s_{n}(Q\,A) + s_{m+1}((I - Q) A) =
s_{n}(Q \,A),
$$
because~$\operatorname{rk}(I- Q) = m < m+1$. Thus, we have
$$s_{2n}(A) \leq s_{n}(Q \,A) \le \hat C \,s_n(A^\prime)$$ for some constant $0<\hat C<\infty$ and $n \ge m$. The right inequality is a consequence of the construction $\mathcal R(Q\,A)=\mathcal R(A^\prime)$, which implies $\mathcal R(Q\,A) \subset \mathcal R(A^\prime)$ and yields with Corollary~\ref{cor:comnoncom} in combination with Proposition~\ref{prop:decay-rates1} the existence of the constant $\hat C$ satisfying the inequality $s_{n}(Q \,A) \le \hat C \,s_n(A^\prime)$. Moreover, there is a constant $0<\overline C<\infty$ from $\mathcal R (A^\prime) \subset \mathcal R(A)$
such that we have $s_n(A^\prime) \le \overline C\,s_n(A)$. Together we find, by recalling the condition \eqref{eq:s2} with the constant $\underline C>0$, the estimate
$$\underline C\,s_n(A) \le s_{2n}(A) \leq \hat C\,s_n(A^\prime)\le \hat C\,\overline C\,s_{n}(A) \quad (n \ge m),$$
which proves the lemma.
\end{proof}

This lemma will be applied in Section~\ref{sec:rangestudies} for comparing the ill-posedness of pairs of compact operators, where the range of one is a proper subset of the range of the other. We note that the required assumption~\eqref{eq:s2} of the lemma is satisfied if the decay rate of the singular values of $A$ is polynomial, i.e.~if there are constants $0<c_1,c_2<\infty$ and exponents $0<\theta_1 \le \theta_2<\infty$ such that $c_1\,n^{\theta_1} \le s_n(A) \le c_2\,n^{\theta_2}\;(n \in \nat)$.

\section{Impact on regularization}
\label{sec:regularization}

The comparison of the smoothing properties of the operators~$A$
and~$A^{\prime}$, say~$A^{\prime}\prec A$, may loosely be understood in the sense that the
application of the inverse of the injective mapping~$A$ may be compensated by the
application of the operator $A^{\prime}$. This reasoning would assume
that we have $\range(A^{\prime}) \subset \mathcal D(A^{-1})$, because
then~$A^{-1}\circ A^{\prime}$ could be defined. Therefore, the
orthogonal mapping~$R^{\ast}$ may be used to put the range
of~$A^{\prime}$ into the target space~$Y$ of~$A$. Then it makes sense
to consider the potentially unbounded mapping~$A^{-1} R^{\ast}
A^{\prime}$.

The solution theory of ill-posed operator equations deals with replacing the unbounded
mapping~$A^{-1}$ by bounded mappings $$g_{\alpha}(A^{\ast}A)A^{\ast}\colon Y \to X\qquad (\alpha>0),$$
which are constructed by a family of generator functions $g_\alpha$ for regularization.  We
will not dwell into details here, rather we refer to the
monograph~\cite{EHN96}. In this focus we can formulate the following result.

\begin{prop} \label{prop:regu}
  Let $g_{\alpha} $ be a family of generator functions for regularization.
  We have the following dichotomy.
  \begin{enumerate}
  \item Either $\range\lr{R^{\ast}A^{\prime}} \subset \range(A)$, and
    then
  $$
\norm{g_{\alpha}(A^{\ast}A) A^{\ast}\; R^{\ast}\;A^{\prime}}{X^{\prime}\to
  X}\ \text{is uniformly bounded as}\  \alpha\to 0,
$$
\item or $\range\lr{R^{\ast}A^{\prime}} \not \subset \range(A)$, and
  then the family
  $$\norm{g_{\alpha}(A^{\ast}A) A^{\ast}\; R^{\ast}\;A^{\prime}}{X^{\prime}\to
    X},\ \alpha>0
  $$is unbounded.
  \end{enumerate}
\end{prop}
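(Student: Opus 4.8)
The plan is to prove the dichotomy by treating the two alternatives separately, exploiting in the first case the factorization delivered by Douglas' Range Inclusion Theorem and in the second case the classical divergence of regularized solutions for right‑hand sides outside~$\domain(A^{-1})$. As preparation I would recall the two properties of a family $g_{\alpha}$ of generator functions for regularization that are actually used (both belong to the standard setting of~\cite{EHN96}): there is a constant~$\gamma$ with~$\abs{\lambda\, g_{\alpha}(\lambda)}\le\gamma$ for all~$\lambda\in[0,\norm{A}{}^{2}]$ and all~$\alpha>0$; and, for every~$z\in Y$, the regularized element~$x_{\alpha}:=g_{\alpha}(A^{\ast}A)A^{\ast}z$ converges in~$X$ as~$\alpha\to 0$ if and only if~$z\in\domain(A^{-1})=\range(A)$ --- here using that~$A$ is injective with dense range --- and~$\norm{x_{\alpha}}{X}\to\infty$ otherwise.

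For the first alternative, assume~$\range\lr{R^{\ast}A^{\prime}}\subset\range(A)$. Applying Douglas' theorem to the operators~$H:=R^{\ast}A^{\prime}$ and~$G:=A$ (which have the common range space~$Y$) --- exactly the step carried out in the proof of Theorem~\ref{thm:Douglas} --- yields a bounded operator~$S\colon X^{\prime}\to X$ with~$R^{\ast}A^{\prime}=A\,S$. Hence
$$
g_{\alpha}(A^{\ast}A)\,A^{\ast}\,R^{\ast}\,A^{\prime}\;=\;g_{\alpha}(A^{\ast}A)\,A^{\ast}A\,S ,
$$
and, by the functional calculus for the self‑adjoint operator~$A^{\ast}A$, one has~$\norm{g_{\alpha}(A^{\ast}A)A^{\ast}A}{X\to X}=\sup_{\lambda\in\sigma(A^{\ast}A)}\abs{\lambda\, g_{\alpha}(\lambda)}\le\gamma$, uniformly in~$\alpha$. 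Thus the family is bounded by~$\gamma\,\norm{S}{X^{\prime}\to X}$, settling the first case.

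For the second alternative, assume~$\range\lr{R^{\ast}A^{\prime}}\not\subset\range(A)$ and pick~$x^{\prime}\in X^{\prime}$ with~$\norm{x^{\prime}}{X^{\prime}}=1$ such that~$z:=R^{\ast}A^{\prime}x^{\prime}\notin\range(A)=\domain(A^{-1})$. By the property recalled above,~$\norm{g_{\alpha}(A^{\ast}A)A^{\ast}z}{X}\to\infty$ as~$\alpha\to 0$, and therefore
$$
\norm{g_{\alpha}(A^{\ast}A)A^{\ast}R^{\ast}A^{\prime}}{X^{\prime}\to X}\;\ge\;\norm{g_{\alpha}(A^{\ast}A)A^{\ast}R^{\ast}A^{\prime}x^{\prime}}{X}\;=\;\norm{g_{\alpha}(A^{\ast}A)A^{\ast}z}{X}\;\to\;\infty ,
$$
so the family is unbounded as~$\alpha\to 0$.

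I expect the only point requiring genuine care to be the divergence statement used in the second alternative, namely that~$z\notin\domain(A^{-1})$ forces~$\norm{x_{\alpha}}{}\to\infty$ rather than merely~$\sup_{\alpha}\norm{x_{\alpha}}{}=\infty$. I would obtain it by writing~$\norm{x_{\alpha}}{}^{2}=\int_{0}^{\norm{A}{}^{2}}\lambda\, g_{\alpha}(\lambda)^{2}\,d\norm{F_{\lambda}z}{}^{2}$, with~$F$ the spectral family of~$AA^{\ast}$, then applying Fatou's lemma together with the pointwise limit~$\lambda\, g_{\alpha}(\lambda)^{2}\to\lambda^{-1}$ valid on~$(0,\norm{A}{}^{2}]$, and finally using that the Picard‑type integral~$\int_{0}^{\norm{A}{}^{2}}\lambda^{-1}\,d\norm{F_{\lambda}z}{}^{2}$ diverges precisely because~$z\notin\range(A)$ while~$A$ has dense range. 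All other ingredients --- the factorization~$R^{\ast}A^{\prime}=A\,S$ and the uniform spectral bound~$\abs{\lambda\, g_{\alpha}(\lambda)}\le\gamma$ --- are already in hand, so no additional effort is needed there.
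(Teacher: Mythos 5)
Your proof is correct, and for the first alternative it takes a genuinely different route from the paper. The paper's own proof is very short: it reduces both cases to the classical pointwise dichotomy (cited from Va\u{\i}nikko--Veretennikov, \cite[Chapt.~2, Thm.~5.2]{MR859375}) that for each single element $z\in Y$ either $z\in\range(A)$ and $g_{\alpha}(A^{\ast}A)A^{\ast}z$ converges to $A^{-1}z$, or $z\notin\range(A)$ and $\norm{g_{\alpha}(A^{\ast}A)A^{\ast}z}{X}$ is unbounded; item (1) then follows from pointwise boundedness via the uniform boundedness principle (left implicit in the paper), and item (2) from a single witness, exactly as in your second alternative. You instead handle item (1) constructively: Douglas' theorem gives the factorization $R^{\ast}A^{\prime}=A\,S$, and the spectral bound $\abs{\lambda\,g_{\alpha}(\lambda)}\le\gamma$ yields the explicit uniform bound $\gamma\,\norm{S}{X^{\prime}\to X}$. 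This buys a quantitative estimate and avoids Banach--Steinhaus, at the cost of invoking Theorem~\ref{thm:Douglas} (which is of course already available in the paper). Your Fatou/Picard argument for the divergence in the second alternative is a correct self-contained substitute for the cited classical result, and in fact proves the stronger statement $\norm{x_{\alpha}}{X}\to\infty$, more than the mere unboundedness the proposition asserts; it does rely on the standard pointwise property $g_{\alpha}(\lambda)\to 1/\lambda$ and nonnegativity of the integrand, both of which hold in the setting of~\cite{EHN96}.
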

\begin{proof}
This  is a consequence of the well-known dichotomy, see
e.g.~\cite[Chapt. 2, Thm. 5.2]{MR859375}. In ibid., this is formulated
for iterative regularization schemes. However, such philosophy is also valid for
arbitrary types of regularization. For an operator~$A$ with dense
range, it says the following:  Either~$y\in \range(A)$ and
then~$g_{\alpha}(A^{\ast}A)A^{\ast}y$ converges to~$A^{-1}y$, or~$y\not
\in \range(A)$ in which case~$\norm{g_{\alpha}(A^{\ast}A)A^{\ast}y}{X}$
is unbounded as~$\alpha\searrow 0$. The assertion of the proposition is
a direct consequence of this.
\end{proof}

\section{Application of spectral theorem to partial ordering}
\label{sec:spectral}

Definition~\ref{def:ordering} is also applicable for comparing two non-compact operators via Halmos' version (cf.~\cite{h63}) of the spectral theorem, applicable to self-adjoint linear operators. Proposition~\ref{prop:decay-rates1}, for example, which has been formulated for compact operators via decay rates
of singular values, can be extended by exploiting a measure space $(\Omega,\mu)$ to  multiplication operators $M_{f}\colon L_{2}(\Omega,\mu)\to
  L_{2}(\Omega,\mu)$ defined as
$$[M_f \xi](\omega):=f(\omega)\,\xi(\omega) \quad (\mu-a.e.\;\mbox{on}\;\Omega), $$
with multiplier functions $f \in L^\infty(\omega,\mu)$.

   We know from Proposition~\ref{prop:absolute} that it is enough to
   consider non-negative self-adjoint operators. Hence we
   consider a pair~$H\colon X \to X$ and $H^\prime\colon X^\prime \to
   X^\prime$ of non-negative self-adjoint semi-definite operators. In
   this case the occurring multiplier functions~$f$ and~$f^{\prime}$ are real-valued and non-negative.
\begin{prop}
  \label{prop:normal-comparison}
  Suppose that the two self-adjoint positive semi-definite bounded linear operators $H \colon X \to X$ and $H^\prime \colon X^\prime \to X^\prime$, both with non-closed ranges $\mathcal R(H)$ and $\mathcal R(H^\prime)$, admit a spectral
  representation with respect to a  \emph{common semi-finite measure
    space} $(\Omega,\mu)$. Thus we have representations
  $$
  H= U M_{f}U^*\quad \text{and}\quad H^{\prime} = U^{\prime} M_{f^{\prime}} U^{{\prime}*}
  $$
  with orthogonal operators $U \colon L_{2}(\Omega,\mu) \to X$,  $U^\prime \colon L_{2}(\Omega,\mu) \to X^\prime$, and with
  the two multiplication operators
  $$
  M_{f}\colon L_{2}(\Omega,\mu) \to
  L_{2}(\Omega,\mu)\quad \text{and}\quad M_{f^{\prime}}\colon L_{2}(\Omega,\mu)\to
  L_{2}(\Omega,\mu),
  $$
  respectively, characterized by the multiplier functions $f$ and $f^\prime$, which are both non-negative a.e.~on $\Omega$ and belong to $L^\infty(\Omega,\mu)$. If also the quotient function
  $\frac{f^{\prime}}f$ belongs to $L^\infty(\Omega,\mu)$, and hence the
  multiplication operator~$M_{\frac{f^{\prime}}{f}}$ is bounded,
  then $H^\prime$ is \emph{more ill-posed than} $H$, i.e.~$H^{\prime}\prec_{R,S} H$, with the orthogonal mapping $R:= U^{\prime}U^{\ast} \colon X \to X^\prime$ and the bounded linear operator $S:= U M_{f^{\prime}/f}  U^{\prime *} \colon X^\prime \to X$.
\end{prop}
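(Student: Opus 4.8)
The plan is to verify directly that the operators $R := U^{\prime} U^{\ast}$ and $S := U M_{f^{\prime}/f} U^{\prime\ast}$ named in the statement do what is claimed: that $R\colon X\to X^{\prime}$ is orthogonal, that $S\colon X^{\prime}\to X$ is bounded, and that $R\,H\,S = H^{\prime}$; the factorization then witnesses $H^{\prime}\prec_{R,S} H$ in the sense of Definition~\ref{def:ordering}. The whole argument reduces to the multiplicative calculus of multiplication operators on $L_{2}(\Omega,\mu)$, i.e.\ $M_{g}M_{h}=M_{gh}$, together with the unitarity relations $U^{\ast}U = I_{L_{2}(\Omega,\mu)}$ and $UU^{\ast}=I_{X}$ (and likewise for $U^{\prime}$), which hold because $U$ and $U^{\prime}$ are orthogonal. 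I note that the non-closedness of $\mathcal{R}(H)$ and $\mathcal{R}(H^{\prime})$ plays no role in the factorization itself; it only records that we are in the ill-posed regime covered by Definition~\ref{def:ordering}.

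First I would check orthogonality of $R$. Its adjoint is $R^{\ast}=U U^{\prime\ast}$, so $R^{\ast}R = U(U^{\prime\ast}U^{\prime})U^{\ast} = UU^{\ast} = I_{X}$ and $R R^{\ast} = U^{\prime}(U^{\ast}U)U^{\prime\ast} = U^{\prime}U^{\prime\ast} = I_{X^{\prime}}$, as required. For boundedness of $S$ I would estimate
$$
\norm{S}{X^{\prime}\to X}\ \le\ \norm{U}{L_{2}(\Omega,\mu)\to X}\,\norm{M_{f^{\prime}/f}}{}\,\norm{U^{\prime\ast}}{X^{\prime}\to L_{2}(\Omega,\mu)}\ =\ \norm{M_{f^{\prime}/f}}{},
$$
using $\norm{U}{}=\norm{U^{\prime\ast}}{}=1$, and then invoke the standard fact that over a semi-finite measure space the norm of a multiplication operator equals the essential supremum of its multiplier, so $\norm{M_{f^{\prime}/f}}{} = \norm{f^{\prime}/f}{L^{\infty}(\Omega,\mu)}<\infty$ by hypothesis.

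Next I would compute the composition. Substituting the spectral representations and collapsing the two inner factors $U^{\ast}U$ to $I_{L_{2}(\Omega,\mu)}$ gives
$$
R\,H\,S = \lr{U^{\prime}U^{\ast}}\lr{U M_{f}U^{\ast}}\lr{U M_{f^{\prime}/f}U^{\prime\ast}} = U^{\prime}\,M_{f}\,M_{f^{\prime}/f}\,U^{\prime\ast}.
$$
It remains only to identify $M_{f}M_{f^{\prime}/f}$ with $M_{f^{\prime}}$ as operators on $L_{2}(\Omega,\mu)$, after which $R\,H\,S = U^{\prime}M_{f^{\prime}}U^{\prime\ast}=H^{\prime}$, finishing the proof.

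I expect this last identification to be the only point requiring genuine care, since it hinges on the behaviour of $f^{\prime}/f$ on the zero set $N:=\set{\omega\in\Omega\colon f(\omega)=0}$ of $f$. Precisely because $f^{\prime}/f\in L^{\infty}(\Omega,\mu)$, the set $\set{\omega\in N\colon f^{\prime}(\omega)>0}$ must be $\mu$-null, for otherwise $f^{\prime}/f$ would equal $+\infty$ on a set of positive measure; hence $f^{\prime}=0$ $\mu$-a.e.\ on $N$. Adopting the usual convention $(f^{\prime}/f)(\omega):=0$ for $\omega\in N$, one then has the pointwise identity $f(\omega)\cdot(f^{\prime}/f)(\omega)=f^{\prime}(\omega)$ for $\mu$-a.e.\ $\omega\in\Omega$ --- trivially on $N$, and by cancellation on its complement. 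Therefore $M_{f}M_{f^{\prime}/f}=M_{f\cdot(f^{\prime}/f)}=M_{f^{\prime}}$, which closes the argument; all the remaining steps are routine manipulations with the spectral representations.
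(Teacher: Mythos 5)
Your proposal is correct and follows essentially the same route as the paper, whose entire proof consists of the commutative diagram in Figure~\ref{fig:normal} encoding exactly the factorization $H^{\prime}=R\,H\,S$ that you verify explicitly. Your additional care about the identity $M_{f}M_{f^{\prime}/f}=M_{f^{\prime}}$ on the zero set of $f$ is a detail the paper leaves implicit, but it does not change the argument.
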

\begin{proof}
  It suffices to draw the following diagram, given in Figure~\ref{fig:normal}.
\end{proof}

  \begin{figure}[ht]
    \centering
     \begin{equation*}
\begin{CD}
    X^{\prime} @> H^{\prime}>> X^{\prime}\\
  @V {U^{\prime *}} VV @AA U^{\prime}  A \\
  L_{2}(\Omega,\mu)  @> M_{f^{\prime}} >>L_{2}(\Omega,\mu) \\
  @V {M_{{f^{\prime}}/{f}}}  VV @AA I_{ L_{2}(\Omega,\mu) } A \\
  L_{2}(\Omega,\mu)  @> M_{f} >>L_{2}(\Omega,\mu) \\
  @V {U} VV @AA U^{\ast} A \\
    X @> H>> X\\
  \end{CD}
\end{equation*}
    \caption{Comparison of self-adjoint operators $H$ and $H^\prime$}
    \label{fig:normal}
  \end{figure}

\begin{rem}
  For the factorization $H= U M_{f}U^* \colon X \to X$ of the self-adjoint positive semi-definite operator $H$ with non-closed range $\mathcal R(H)$ we have that the spectrum of $H$ and the essential range of the multiplier function $f$ coincide (cf.~\cite[Theorem~2.1(g)]{Haase18}). They represent the same closed subset of the bounded interval $[0,\|H\|_{X \to X}] \subset \real$, and zero belongs to that subset, because of the ill-posedness. It is important to distinguish the case of a finite measure space $(\Omega,\mu)$ with $\mu(\Omega)<\infty$,
  where \emph{increasing rearrangements} (see, e.g.,~\cite{EHZ93}) of the multiplier function $f$ characterize the situation, and the case of an infinite measure $\mu(\Omega)=\infty$, where \emph{decreasing rearrangements} of $f$ play this role. We refer for detailed studies in this context to the recent papers \cite{MNH22} and \cite{WH24}.
\end{rem}

\begin{xmpl}\label{xmpl:multiplication1-ops}
  As a typical example for comparing non-compact operators in the case $\Omega=[0,1]$ with Lebesgue measure $\mu$ on $\real$ and $\mu(\Omega)<\infty$ serves the family of pure multiplication operators in $X=X^\prime=L_2(0,1)$ with $U=U^\prime=I: X \to X$. We compare the two bounded, non-compact, self-adjoint and positive semi-definite operators  $Hx = M_{f} x$ and $H^{\prime}x = M_{f^{\prime}}x$ mapping on $L_{2}(0,1)$ with continuous and strictly increasing multiplier functions $f(t)$ and $f^\prime(t)$ for $0 < t \le 1$ satisfying the conditions $\lim_{t \to +0}f(t)=\lim_{t \to +0}f^\prime(t)=0$.
  If there is a finite constant $C>0$ such that $\frac{f^\prime(t)}{f(t)} \le C$ for all $0<t \le 1$, then $H^\prime$ is \emph{more ill-posed than} $H$, where precisely $H^{\prime}\prec_{I,M_{f^{\prime}/f}} H$. This case occurs when $f(t)=c_1t$ and $f^\prime(t)=c_2t \;(0 < t \le 1)$ with positive constants $c_1$ and $c_2$. But then also
  $H  \prec_{I,M_{f/f^\prime}} H^\prime$ takes place. If, however, the decay rate $f^{\prime}(t)\searrow 0$ is higher than the rate of $f(t) \searrow 0$ as~$t\to +0$, we have $H^{\prime} \prec_{I,M_{f^{\prime}/f}} H$ but $H \not \prec H^{\prime}$ and $H^\prime$ is even \emph{strictly more ill-posed than} $H$ like in the example $f(t)=t^{\kappa_1}$ and
  $f^\prime(t)=\exp\left(-\frac{1}{t^{\kappa_2}}\right) \;(0 < t \le 1)$ with positive constants $\kappa_1$ and $\kappa_2$.
\end{xmpl}

\begin{xmpl}\label{xmpl:multiplication2-ops}
  An example for non-compact operators analog to Example~\ref{xmpl:multiplication1-ops}, but in the infinite measure case $\Omega=[0,\infty)$ with Lebesgue measure $\mu$ on $\real$ and pure multiplication operators in $X=X^\prime=L_2(0,\infty)$ can be found by comparing continuous multiplier functions $f(t)$ and $f^\prime(t)$, which are strictly decreasing on $[0,\infty)$ and tend to zero as $t \to \infty$. Here, the decay rate of the multiplier functions at infinity determines the partial ordering. If the decay rate of $f^\prime$ at infinity is higher than those of $f$,
  then $H^\prime$ is strictly more ill-posed than $H$, i.e.~$H^\prime \prec_{I,M_{f^{\prime}/f}} H$ but $H \not \prec H^\prime$. This if for example the case for $f(t)=(1+t)^{-1}$ and $f^\prime(t)=(1+t)^{-2}\;(0 \le t < \infty)$.
  \end{xmpl}

\section{Further examples}
\label{sec:rangestudies}

In this section, we are going to study by means of examples the partial ordering of compact operators with common image spaces $Y=Y^\prime$. This case has already been
discussed in Corollary~\ref{cor:comnoncom}.
The goal is to check whether the previous results allow us to draw
conclusions about the comparability for specific operators.
\begin{xmpl} \label{xmpl:JE}
  First let us compare
  \begin{enumerate}
  \item
  the Riemann–Liouville fractional integration
operators
$J^m: L_2(0,1) \to L_2(0,1)$ of order $m=1,2,...$, defined as
$$
[J^m x](s):= \int_0^{s} \frac{(s-t)^{m-1}}{\Gamma(m)} x(t)\, d t \quad
(0 \le s \le 1),
$$
for which solving the associated operator equation \eqref{eq:opeq1}
requires to find the $m$-th derivative of the function $y$, and
\item the embedding operators $E^k \colon H^k(0,1) \to L_2(0,1)$ mapping from the Hilbertian Sobolev spaces $H^k$ to $L_2$ for $k=1,2,...$.
\end{enumerate}
Of course, the decay of the singular values of the embeddings are
known, specifically we have that
\begin{equation} \label{eq:Ek}
s_n(E^k) \asymp n^{-k}  \quad \mbox{as} \quad n \to \infty
\end{equation}
(see, e.g.,~\cite{Koenig86}).

Also, the ranges~$\range(J^{m})$ of the mapping~$J^{m}$ can be
characterized. These are known to be subsets of~$H^{m}(0,1)$, with a
finite set of linear constraints, thus the range is a
finite-codimensional subspace of~$H^{m}(0,1)$. From this we conclude
by virtue of Corollary~\ref{cor:comnoncom} that~$J^{m} \prec
E^{m}$. Moreover, since the decay rate given in~(\ref{eq:Ek}) is
polynomial, Lemma~\ref{lem:codim} applies and actually yields
that~$J^{m} \asymp E^{m}$. Consequently, by Corollary~\ref{cor:asymp}
we find that~$s_{n}(J^{m})\asymp n^{-m}$ as~$n\to\infty$. Of course,
this is known, and we refer to~\cite{VuGo94}, for instance.
Next, having these decay rates for both operators~$E^{m}$ and~$J^{m}$
we can apply Proposition~\ref{prop:decay-rates2}, see Remark~\ref{rem:littleo}, to immediately
conclude that for~$k>m$ the operator~$E^k$ is strictly more ill-posed
than $J^m$, whereas for~$m>k$ the opposite assertion holds true.
\end{xmpl}

The following example deals with functions of two real variables defined on the unit square $[0,1]^2$.
\begin{xmpl} \label{xmpl:HF}
  Here we compare for $m=1,2,...$
  \begin{enumerate}
  \item
  the compact embedding operators $\E m: H^m([0,1]^2) \to
  L_2([0,1]^2)$, with
  \item  the compact \emph{mixed integration operator} $\J: L_2([0,1]^2) \to L_2([0,1]^2)$ defined as
    $$
    [\J\, x(t_1,t_2)](s_1,s_2):= \int_0^{s_1} \int_0^{s_2}
    x(t_1,t_2)\, d t_1 \,d t_2 \quad ((s_1,s_2) \in [0,1]^2).
    $$
The associated operator equation \eqref{eq:opeq1} aims at finding the second mixed derivative $\frac{\partial}{\partial s_1 \partial s_2} y(s_1,s_2)$ of the right-hand side function $y$ of \eqref{eq:opeq1}.
\end{enumerate}
We first discuss smoothness~$m=1$. In this case we know
that~$\range(\J) \subset H^{1}(0,1)^{2} = \range(\E 1)$, and
hence~$\J \prec \E m$.

However, for~$m=2$ we have that the range $\mathcal
R(\E 2)=H^2([0,1]^2)$,  but we have $\mathcal R(\E 2) \not
\subset \range(\J)$ as outlined  in \cite[\S 4]{HF23} by a
counterexample. Thus range inclusions do not apply here. However, see
e.g., \cite[\S 3c]{Koenig86},   we
know that
\begin{equation} \label{eq:E2m}
s_n(\E 2) \asymp n^{-1}   \quad \mbox{as} \quad n \to
\infty.
\end{equation}
Thus
$$
s_n(\E 2) = \littleo {\frac{\log(n)}n}\;\; \text{as}\quad n\to\infty,
$$
where the right hand side rate corresponds to the decay rate of the
singular values, i.e.,\ $s_{n}(\J)\asymp \frac{\log(n)}n$, see, e.g.,
\cite[Prop.~3.1]{HF23}. By virtue of
Proposition~\ref{prop:decay-rates2} there must hold true
that~$\E 2\prec_{R,S} \J$, for some orthogonal mapping~$R$
and bounded~$S$. This factorization is not known
to us.
\end{xmpl}
\section*{Discussion}
\label{sec:discussion}
Two referees raised several issues related to the present study. We
acknowledge these,  and we comment on some of them, here.
\par
Firstly, does a comparison~$A^{\prime} \prec A$ imply that
also~$\lr{A^{\prime}}^{\ast} \prec A^{\ast}$? This does not follow
from the definition, because the orthogonal mapping~$R$ appears only
on the range side. However, for compact operators we may use
Proposition~\ref{prop:absolute}, because in this case we need to
compare~$|A|^{2}= A^{\ast} A$ with~$| A^{ \ast}|^{2} = A A^{\ast}$,
and these are related operators that share the same sequence of
singular values, and hence Propositions~\ref{prop:decay-rates1}
and~\ref{prop:decay-rates2} apply, and answer the question in the
affirmative for compact operators.
\par
Ill-posed equations are studied for linear as well as non-linear
mappings. Then the question is, whether non-linear Frèchet
differentiable mappings can be 
compared, for instance by using their Frèchet
derivatives. Of course, the Frèchet derivatives vary from point to
point, and hence comparison must be local.
Accordingly, an attempt to define a local degree of ill-posedness for nonlinear problems was made in~\cite{Hof94}.
In~\cite{Hof98}, one even finds the introduction of a partial ordering
in this context, and  the degree of ill-posedness, and usual partial orderings can be exploited.
The Fr\'echet derivative seems to be the optimal local linearization in this context. However, as already mentioned in~\cite{HofSch94}, there are nonlinear operators, for which the Fr\'echet derivative does not characterize the nonlinear operator locally sufficiently well. This occurs in general
when no tangential cone condition applies, for example in case of the
non-compact autoconvolution operator mapping in $L^2(0,1)$ with
compact Fr\'echet derivative everywhere. 
\par
An alternative approach to the degree of ill-posedness based on the
spectral theorem in multiplication operator form was recently
presented in~\cite{WH24}. In contrast to the present work, where the
comparison of pairs of linear operators with respect to the
ill-posedness is investigated, the focus of that study is on the characterization of whole classes of mildly, moderately and severely ill-posed problems by means of growth rates of distribution functions and decay rates of associated decreasing rearrangements. 


\end{document}